\newcommand{\D}{\mathbb D}
\newcommand{\C}{\mathbb C}
\newcommand{\R}{\mathbb R}
\newcommand{\N}{\mathbb N}
\newcommand{\Z}{\mathbb Z}
\newcommand{\T}{\mathbb T}
\newcommand{\G}{\mathcal G}
\newcommand{\e}{\mathbf e}
\newcommand{\uu}{\mathbf u}
\newcommand{\tv}{\mathbf t}
\newcommand{\EN}{\mathcal E_N}
\newtheorem{theorem}{Theorem}
\newtheorem{lemma}[theorem]{Lemma}
\newtheorem{corollary}[theorem]{Corollary}
\renewcommand{\Re}{\mathop {\rm Re}\nolimits}
\renewcommand{\Im}{\mathop {\rm Im}\nolimits}
\title{Inner functions and Pick-Nevanlinna interpolation in a multi-connected domain. }
\author{Michel Crouzeix, \footnote{Univ.\,Rennes, CNRS, IRMAR\,-\,UMR\,6625, F-35000 Rennes, France, email: michel.crouzeix@univ-rennes.fr}}
\begin{document}
\maketitle
\begin{abstract}
We describe the set of inner functions of finite order in a multi-connected domain, then we consider an optimization formulation of the Pick-Nevanlinna interpolation problem, and we generalize it to Hermite type interpolation.
\end{abstract}

\paragraph{2000 Mathematical subject classifications\,:} 30D50; 30D99

\noindent{\bf Keywords\,:}{Inner functions, Pick-Nevanlinna interpolation}

\section{Introduction}
We consider a bounded connected domain of the complex plane $\Omega\subset \C$ with $k$ holes.
We denote by $\Gamma _0$ its exterior boundary and by  $\Gamma _j$, $j=1,\dots,k$, 
the interior ones\,; these curves are assumed analytic and pairwise
distinct\,: $\Gamma _i\cap\Gamma _j=\emptyset$ if $i\neq j$. 
Let  $r$ distinct points $\zeta_1, \zeta _2,\dots,\zeta _r$ in $\Omega$ 
and $r$ complex numbers $\tau _1, \tau_2,\dots,\tau_r,$ not all zero, be given.
The classical Pick-Nevanlinna problem concerns the existence of analytic functions $f$ on $\Omega$
with modulus bounded by 1 such that $f(\zeta _j)=\tau _j$, $1\leqslant j\leqslant r$.  
In this paper, we look at a slightly different problem,
\begin{align*}
(P)\quad\left\{\begin{matrix}
\textrm{Find a holomorphic function } f^* \textrm{ which realizes} ,\hfill\\[10pt]
m^*=\min\{\|f\|_\infty\,: f\ \in H^\infty(\Omega),\  f(\zeta _1)=\tau _1,\dots,f(\zeta _r)=\tau _r\}.
\end{matrix}
\right.
\end{align*}
Here $H^\infty(\Omega)$ is the space of bounded holomorphic functions on $\Omega$ equipped with the norm $\|f\|_{\infty}=\sup_{z\in\Omega}|f(z)|$.
Note that such a function $f^*$ realizing this minimum exists. Indeed, the set of functions satisfying the 
constraints is nonempty; for instance, the Lagrange interpolation polynomial 
may be chosen. Then, existence follows from the compactness of the restriction map $f\mapsto f_{|K}$ for any compact set $K\subset\Omega$.

An aim of this paper is to show that the function $f^*$ which realizes $m^*$ is unique and that $f^*/m^*$ is an inner function in $\Omega$ 
of order less or equal to $r{+}k{-}1$. 
Our approach is slightly different, but the key ideas are due to Garabedian and Ahlfors and may be found in the article \cite{gara}, but where 
some proofs need to be completed. In fact, I have written this manuscript since I would have a generalization of the Pick-Nevanlinna theory  
to the Hermite type interpolation. I needed this for proving the following result
\begin{theorem}\label{th1}
 Let $A\in \C^{d,d}$ be a $d\times d$ matrix with eigenvalues in $\Omega$. Then, there exists  an inner function $f_0$ of order 
 $\leqslant d{+}k{-}1$ which realizes
 \[
 \|f_0(A)\|=\max\{\|f(A)\|\,: f\in H^\infty(\Omega), |f|)\leqslant1\textrm{ in  }\Omega\}.
 \]
\end{theorem}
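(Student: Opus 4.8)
The plan is to reduce the matrix optimization to the (Hermite) Pick--Nevanlinna problem already treated. First I would record that $f(A)$ depends only on a finite jet of $f$ at the spectrum: writing $\lambda_1,\dots,\lambda_p$ for the distinct eigenvalues of $A$ and $m_i$ for the multiplicity of $\lambda_i$ in the minimal polynomial, the Lagrange--Sylvester form of the holomorphic functional calculus gives $f(A)=\Phi(Lf)$, where $Lf=\bigl(f^{(j)}(\lambda_i)\bigr)_{0\le j<m_i,\ 1\le i\le p}\in\C^N$ is the jet map, $N=\sum_i m_i=\deg(\text{min.\ poly.})\le d$, and $\Phi:\C^N\to\C^{d,d}$ is linear. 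Hence $\max\{\|f(A)\|:\|f\|_\infty\le1\}=\max\{\|\Phi(v)\|:v\in K\}$, where $K=\{Lf:f\in H^\infty(\Omega),\ \|f\|_\infty\le1\}$.

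Second, I would identify the geometry of $K$. Since $L$ is linear and surjective onto $\C^N$ (any Hermite jet is realized by a polynomial, which lies in $H^\infty(\Omega)$ as $\Omega$ is bounded) and the unit ball of $H^\infty(\Omega)$ is convex, $K$ is convex. The same compactness argument that gives existence of $f^*$ shows that $K$ is compact and that the functional $\mu(v)=\min\{\|f\|_\infty:Lf=v\}$ is attained; $\mu$ is then a genuine norm on $\C^N$ (if $\mu(v)=0$ the minimizer is $0$, so $v=0$), and $K=\{\mu\le1\}$ is its closed unit ball.

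Third, I would maximize. The map $v\mapsto\|\Phi(v)\|$ is a seminorm, hence convex, so by Bauer's maximum principle it attains its maximum over the compact convex set $K$ at an extreme point $v^*$; extreme points of a ball lie on its unit sphere, so $\mu(v^*)=1$. (Note $v^*\neq0$, since $f\equiv1$ already gives $\|f(A)\|=\|I\|=1>0$, so the maximum is positive.) Let $f_0$ be the minimal-norm interpolant of the Hermite data $v^*$, so that $Lf_0=v^*$ and $\|f_0\|_\infty=\mu(v^*)=1$. By the generalized (Hermite) Pick--Nevanlinna theorem this minimizer is unique and $f_0/\mu(v^*)=f_0$ is an inner function of order $\le N+k-1\le d+k-1$. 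Since $\|f_0\|_\infty=1$, $f_0$ is admissible, and $\|f_0(A)\|=\|\Phi(v^*)\|=\max_{v\in K}\|\Phi(v)\|$ is exactly the required maximum.

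Finally, the main obstacle: essentially all of the substantive work is packaged into the generalized Pick--Nevanlinna theorem for Hermite data (uniqueness of the minimizer and its inner-ness with the order bound $N+k-1$), which the body of the paper must supply. Given that input, the only genuinely new points needing care here are the finite-dimensional reduction, namely that $\|f(A)\|$ factors through the $N$-dimensional jet with $N\le d$, and the observation that an extremal $v^*$ satisfies $\mu(v^*)=1$, which is what forces the minimal-norm interpolant itself to be inner rather than merely a scalar multiple of an inner function.
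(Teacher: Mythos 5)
Your proof is correct, and it rests on the same pillar as the paper's: the Hermite Pick--Nevanlinna theorem (Theorem~\ref{thderiv}) applied to the jet of a function at the spectrum of $A$, which is all that $f(A)$ depends on. The only real difference is how the two arguments arrange for the extremal function to be a minimal-norm interpolant of its own data with $m^*=1$. The paper takes any maximizer $f_0$ of $\|f(A)\|$ over the unit ball (existence by Montel compactness) and uses an elementary rescaling, which is the content of its ``Clearly'': if some $f$ with $f(A)=f_0(A)$ had $\|f\|_\infty=m<1$, then $f/m$ would be admissible and $\|(f/m)(A)\|=\|f_0(A)\|/m>\|f_0(A)\|$ (the maximum is $\geqslant\|I\|=1>0$), contradicting maximality; since jet-matching implies $f(A)=f_0(A)$, the maximizer $f_0$ is itself the minimal-norm Hermite interpolant of its own jet, and Theorem~\ref{thderiv} makes it inner of the stated order. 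You instead transfer the whole problem to the finite-dimensional jet space, identify the image $K$ of the unit ball as the closed unit ball of the norm $\mu$, and invoke Bauer's maximum principle to place a maximizer at an extreme point, hence on the $\mu$-unit sphere, and only then pull back to the minimal-norm interpolant. Both routes are sound; yours is more structured and, by using multiplicities in the minimal polynomial, yields the marginally sharper bound $N+k-1$ with $N=\deg(\textrm{min.\ poly.})\leqslant d$, while the paper's is shorter and shows that Bauer's principle is dispensable here --- in your own setting the same rescaling argument gives directly that any maximizer $v^*$ satisfies $\mu(v^*)=1$, since otherwise $v^*/\mu(v^*)\in K$ would do strictly better.
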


The organization of this note is the following. In Section\,\ref{S2} we define and study the inner functions of finite order in 
$\Omega$ (which have some similarity to the Blaschke products). In Section\,\ref{sch} we consider a problem of Schwarz type. 
In Section\,\ref{pn} we analyze an optimization  problem
similar to $(P)$ but restricted to inner functions of finite order.
In Section\,\ref{picnev} we show that this inner function multiplied by $m^*$ is the unique solution of $(P)$. 
In Section\,\ref{picder} we generalize problem $(P)$ by introducing interpolation of derivatives
and we prove Theorem\,\ref{th1}.
In Section\,\ref{link}, we show that the original Pick-Nevanlinna problem has a solution if and only if 
$m^*=m^*(\zeta ,\tau )\leqslant1$, and we look at the dependence of $m^*$ with respect to the data $\zeta _j$ and $\tau _j$. 
Finally, in the Appendix, we prove a result concerning the harmonic measures of the
boundary curves. 
\medskip

We shall say that a function $f$ is analytic in $\overline\Omega$, if $f$ has an analytic extension 
in an open neighborhood of $\overline\Omega$. \medskip

\noindent{\bf Remark. }{\it In order to avoid some technical difficulties, we have assumed  that
the boundary  curves are analytic but the use of a conformal transformation allows to extend the results to Jordan curves.}
\smallskip

\section{Inner functions of finite order}\label{S2}

We denote by $\D=\{z\in \C\,: |z|<1\}$ the open unit disk and  by $\T=\{z\in \C\,: |z|=1\}$ its boundary.
\medskip

\noindent{\bf Definition.} {\it A function $f$ is an inner function of order $n$ in $\Omega$ if $f$ is holomorphic in $\Omega$, if $\|f\|_\infty=1$ 
and if for all $\tau  \in \D$ the equation $f(z)=\tau $ has exactly $n$ solutions in $\Omega$ (counting their multiplicity). In particular, 
a constant function of modulus $1$\,: $f(z)=\lambda\in \T $, is an inner function of order $0$. 
}\medskip

If $f$ is an analytic function in a neighborhood of $\Omega$ such that $|f|=1$ on the boundary $\partial \Omega$, then, from  
the argument principle, the number $n$ of zeros of $f(z){-}\tau$ is finite and independent of $\tau \in \D$, whence $f$ 
is an inner function of order $n$ on $\Omega$. \medskip

We write $z=x{+}iy$ the generic point in $\C$\,; we denote by $d\sigma$ the arclength on the boundary 
of $\Omega$, $\vec\nu=(\nu_1,\nu_2)$ is the unit outward normal, $\partial _\nu=\nu_1\partial _x{+}\nu_2\partial_y$ 
is the normal derivative.
We use the harmonic measures $\omega_i$ of the boundary curves $\Gamma _i$ defined by \cite{ahlf}
\[
-\Delta \omega_i=0\quad\textrm{ in }\Omega, \quad\omega_i=\delta_{ij}\ \textrm{( Kronecker symbol)  on  }\Gamma_j,\ 
j=0,\dots,k.
\]
Clearly, these functions are linearly independent; thus the $k\times k$ symmetric matrix $M$, 
with entries $m_{ij}=\int_\Omega\nabla \omega_i.\nabla \omega_j\,dxdy$, $i$ and $j\in \{1,\dots,k\}$,
is positive definite. Recall also that, by Green formula,
\[
m_{ij}=\int_\Omega\nabla \omega_i.\nabla \omega_j\,dxdy=\int_{\Gamma _i}\partial _\nu\omega_j\,d\sigma .
\]
We denote by $N=(n_{ij})=M^{-1}$ the inverse matrix of $M$.

If $\varphi $ is holomorphic in $\Omega$, nonvanishing in a neighborhood of $\Gamma _j$, 
and if $\gamma _j$ is a  closed curve contained in this neighborhood homotopic to $\Gamma _j$, the integer
\[
\rho _j(\varphi )=\frac{1}{2\pi i}\int_{\gamma_j}\frac{\varphi '(z)}{\varphi (z)}\,dz
\] 
does not depend on the choice of $\gamma_j$ and represents the number of rotations of $\varphi $ on $\Gamma _j$. 
Furthermore, if $\varphi '$ has a continuous extension on $\Gamma _j$ and if $\varphi $ does not vanish on $\Gamma _j$, then it holds 
$\rho _j(\varphi )=\frac{1}{2\pi i}\int_{\Gamma_j}\frac{\varphi '(z)}{\varphi (z)}\,dz=\frac{1}{2\pi }\int_{\Gamma _j}\partial _\nu\log|\varphi |\,d\sigma 
$.

Let us consider a point $p\in\Omega$\,; we denote by  $G_p$ the Green function in this point. Recall that $G_p$ vanishes 
on the boundary $\partial \Omega$, and 
\[
\int_\Omega\nabla G_p.\nabla v\,dxdy=v(p), \quad\textrm{for all smooth functions $v$ vanishing on }\partial \Omega.
\]
Hence $-\Delta G_p=\frac{1}{2\pi} \,\Delta \log|\cdot-p|=\delta (\cdot-p)$, in the distribution sense in $\Omega$.
Also, for $z\in\Omega$, $G_p(z)$ is a harmonic function in $p\neq z$. Note that 
$\int_\Omega\nabla G_p.\nabla \omega_i\,dxdy=0$ since $\omega_i$ is harmonic and $G_p$ vanishes on the boundary;
therefore
\[
0=\int_\Omega\nabla G_p.\nabla \omega_i\,dxdy=\int_{\partial \Omega}\partial _\nu G_p\ \omega_i \ d\sigma +\omega_i(p),
\]
which shows that $\int_{\Gamma _i}\partial _\nu G_p\ d\sigma=-\omega_i(p) $.\medskip

\noindent{\bf Remark. }{\it The functions $\omega_i$ and  $G_p$ have harmonic extensions in a neighborhood of $\Omega$, 
since the boundary has been assumed analytic\,; in particular, they are $C^\infty $ functions on the boundary.}

\begin{lemma} \label{lem1} For each $p\in\Omega$, 
there exists one and only one holomorphic function $\varphi _p$ in $\Omega$ such that
\begin{itemize}
\item $p$ is the unique root of $\varphi_p $ in $\Omega$ and it holds $\varphi_p' (p)>0$,
\item $|\varphi_p |=1$ on $\Gamma _0$,
\item $|\varphi_p |$ is constant on $\Gamma _j$ and $\rho _j(\varphi_p )=0$, for $j=1,\dots,k$.
\end{itemize}
Furthermore, the function $\varphi _p$  is holomorphic in $\overline\Omega$. 
\end{lemma}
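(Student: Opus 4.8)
The plan is to build $\varphi_p$ through its modulus. Setting $u=\log|\varphi_p|$, the three requirements translate into a problem for $u$: it must be harmonic in $\Omega\setminus\{p\}$ with $u(z)=\log|z-p|+O(1)$ near $p$ (a simple zero), equal to $0$ on $\Gamma_0$, and equal to some constant $\lambda_j$ on each $\Gamma_j$. Since $\Delta\log|z-p|=2\pi\,\delta(\cdot-p)$ while $-\Delta G_p=\delta(\cdot-p)$, the combination $u+2\pi G_p$ is harmonic throughout $\Omega$; as $G_p$ vanishes on $\partial\Omega$, this harmonic function takes boundary values $0$ on $\Gamma_0$ and $\lambda_j$ on $\Gamma_j$, so by the very definition of the harmonic measures it equals $\sum_{j=1}^k\lambda_j\,\omega_j$. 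Hence necessarily
\[
u=-2\pi G_p+\sum_{j=1}^k\lambda_j\,\omega_j,
\]
and the whole construction reduces to choosing the $k$ constants $\lambda_j$.

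These constants are fixed by the $k$ conditions $\rho_j(\varphi_p)=0$. Using the identity $\rho_j(\varphi_p)=\frac1{2\pi}\int_{\Gamma_j}\partial_\nu u\,d\sigma$ recalled in the text, together with the two boundary computations $\int_{\Gamma_j}\partial_\nu G_p\,d\sigma=-\omega_j(p)$ and $\int_{\Gamma_j}\partial_\nu\omega_i\,d\sigma=m_{ji}$, I get
\[
\rho_j(\varphi_p)=\omega_j(p)+\frac1{2\pi}\sum_{i=1}^k m_{ji}\,\lambda_i .
\]
Requiring all $\rho_j$ to vanish is therefore the linear system $M\lambda=-2\pi\,(\omega_1(p),\dots,\omega_k(p))^{\top}$, which, since $M$ is positive definite, has the unique solution $\lambda=-2\pi N(\omega_1(p),\dots,\omega_k(p))^{\top}$. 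This already yields the uniqueness of $u$ and an explicit formula; in particular $u$ extends harmonically across $\partial\Omega$, because $G_p$ and the $\omega_i$ do so by analyticity of the boundary.

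It remains to pass from $u$ to $\varphi_p$. I would let $v$ be an (a priori multivalued) harmonic conjugate of $u$ on $\Omega\setminus\{p\}$ and set $\varphi_p=e^{u+iv}$. The function $\varphi_p$ is single-valued precisely when every conjugate period of $v$ lies in $2\pi\Z$: around a small loop about $p$ this period equals $2\pi$, coming from the $\log|z-p|$ singularity (which simultaneously makes $\varphi_p$ holomorphic at $p$ with a simple zero), while around each $\Gamma_j$ it equals exactly $2\pi\rho_j(\varphi_p)=0$ by the choice of $\lambda$. Thus $\varphi_p$ is a well-defined holomorphic function on $\Omega$, indeed holomorphic on $\overline\Omega$ since $u$ extends and $|\varphi_p|=e^{\lambda_j}\neq0$ near the boundary. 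Its modulus realizes the prescribed values, and $p$ is its only zero. Finally, $v$ and hence $\varphi_p$ are determined only up to a multiplicative unimodular constant; writing $\varphi_p(z)=(z-p)g(z)$ with $g(p)\neq0$, the single normalization $\varphi_p'(p)=g(p)>0$ fixes that constant, giving both existence and uniqueness.

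The step I expect to be the main obstacle is the transition in the last paragraph: correctly translating the topological winding conditions $\rho_j(\varphi_p)=0$ into the statement that all conjugate periods of $v$ belong to $2\pi\Z$, so that the exponential $e^{u+iv}$ — rather than $v$ itself — is globally single-valued on the multiply connected domain. Everything before it is a harmonic-function computation made rigid by the positive definiteness of $M$, whereas this point is where the geometry of the holes genuinely enters.
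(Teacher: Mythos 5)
Your proof is correct and takes essentially the same route as the paper: you force $\log|\varphi_p|$ to be $-2\pi G_p+\sum_j\lambda_j\omega_j$ with $\lambda=-2\pi N(\omega_1(p),\dots,\omega_k(p))^{T}$ determined by the winding conditions through the invertibility of $M$ (this is exactly the paper's formula \eqref{eq1}), and then recover $\varphi_p$ by exponentiating a harmonic conjugate. The only organizational difference is that the paper factors out $(z-p)$ first so that all conjugate periods vanish, whereas you keep the logarithmic singularity and allow the period $2\pi$ around $p$; your necessity-first presentation also yields uniqueness (up to the unimodular constant fixed by $\varphi_p'(p)>0$) in the same stroke, where the paper runs a separate quotient argument.
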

\begin{proof}
{\it Existence.}\\
 We set $h=2\pi \sum_{j=1}^kc_j(p)\omega_j-2\pi G_p-\log|\cdot-p|$, with $
c_i(p)=-\sum_{j=1}^kn_{ij}\omega_j(p)$, $i=1,\dots,k$\,: thus $\Delta h=0$ in $\Omega$. Note that, if $i\geqslant1$
\[
\int_{\Gamma _i}\partial _\nu(\sum_{j=1}^kc_j(p)\omega_j-G_p)\,d\sigma =\sum_{j=1}^km_{ij}c_j(p)+\omega_i(p)=0,
\]
it also holds 
$\int_{\Gamma _i}\partial _\nu\log|\cdot-p| \,d\sigma =0$ since $p$ is exterior to $\Gamma _i$, whence  $\int_{\Gamma _i}\partial _\nu h \,d\sigma =0$ if $i\geqslant 1$.
Furthermore, $0=\int_\Omega \Delta h\, dxdy=\sum_{i=0}^k\int_{\Gamma _i}\partial _\nu h\,d\sigma$, which shows that $\int_{\Gamma _0}\partial _\nu h\,d\sigma=0$.
Therefore,  the harmonic function $h$ has a conjugate harmonic function $g$ such that
$h{+}ig$ is holomorphic in $\Omega$ and that we fix by setting $g(p)=0$. Then, we set 
$\varphi _p(z)=(z{-}p)\exp(h(x,y){+}ig(x,y))$ with $z=x{+}iy$.

This function satisfies $\varphi _p(p)=0$, $\varphi'_p(p)=\exp(h(p))>0$, $\log|\varphi _p|=\log |\cdot -p|+h=2\pi \,c_i$ on $\Gamma _i$, if $i=1,\dots,p$, $\log|\varphi _p|=0$ on $\Gamma _0$. And finally, 
$\rho _j(\varphi_p)=\frac{1}{2\pi }\int_{\Gamma _j}\partial _\nu(\log|\cdot-p|+ h)\,d\sigma =0$ for $j\geqslant 1$.\medskip

\noindent{\it Uniqueness.}  Let $\varphi _p$ and $\tilde \varphi _p$ be
two functions satisfying the assumptions of the lemma\,; we set $\psi=\varphi _p/\tilde\varphi _p$ and $u=\log|\psi |$. 
The function $\psi$ is holomorphic in $\Omega$, $u=\log|\psi |$ is harmonic in $\Omega$, $u=0$ on $\Gamma _0$, $u$ 
has a constant value $c_i$ on $\Gamma _i$, $i=1,\dots,k$, whence $u=\sum_{j=1}^kc_j\,\omega_j$. From 
$0=\rho_i(\varphi _p)-\rho_i(\tilde\varphi _p)=\rho (\psi )=\frac{1}{2\pi }\int_{\Gamma _i}\partial _\nu u\,d\sigma =\sum_{j=1}^km_{ij}c_j$ 
for $i=1,\dots,k$, we deduce $c_j=0$; 
thus,  $u=0$ in $\Omega$, and then $\psi $ is constant with modulus 1 in $\Omega$\,;  with $\varphi _p=\psi \tilde\varphi _p$,  $\varphi _p'(p)>0$,  $\tilde\varphi _p'(p)>0$, we get $\psi =1$, which shows the uniqueness.
\end{proof}

The functions $\varphi _p$ are fully defined by
\begin{align}\label{eq1}
\left\{
\begin{matrix}&\varphi _p \textrm{ is holomorphic in }\Omega,\ \varphi '_p(p)>0, \rho_j(\varphi_p )=0\textrm{ for }j=1,\dots,k,\\
&\log|\varphi _p(z)|=-2\pi \Big(\sum_{i,j=1}^k n_{ij}\, \omega_i(p)\,\omega_j(z)+ G_p(z)\Big). \\
\end{matrix}
\right.
\end{align}
\begin{lemma}
 The functions $p\mapsto\log|\varphi _p(z)|$ and $p\mapsto\arg(\varphi _p(z)$ are harmonic in $\Omega$ with respect to the variable $p\neq z$.
\end{lemma}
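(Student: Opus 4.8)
The plan is to read everything off the representation \eqref{eq1} and to treat the two functions separately.

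For $p\mapsto\log|\varphi_p(z)|$ I would argue directly. Writing \eqref{eq1} as
\[
\log|\varphi_p(z)|=-2\pi\sum_{i,j=1}^k n_{ij}\,\omega_i(p)\,\omega_j(z)-2\pi G_p(z),
\]
the double sum is, for fixed $z$, the finite linear combination $\sum_{i=1}^k\big(\sum_{j}n_{ij}\,\omega_j(z)\big)\omega_i(p)$ of the harmonic measures, hence harmonic in $p$ on all of $\Omega$. For the Green term I would invoke the classical symmetry $G_p(z)=G_z(p)$: since $z\mapsto G_z(\cdot)$ is harmonic away from its pole, $p\mapsto G_p(z)=G_z(p)$ is harmonic on $\Omega\setminus\{z\}$. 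Thus $\log|\varphi_p(z)|$ is harmonic in $p\neq z$; note also that, $N$ and $G$ being symmetric, $\log|\varphi_p(z)|$ is symmetric in $(p,z)$.

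For $p\mapsto\arg\varphi_p(z)$ the key structural observation is that $p\mapsto\log\varphi_p(z)$ is, locally, the sum of a holomorphic and an antiholomorphic function of $p$, and that the real and imaginary parts of such a sum are both harmonic. Recall from the construction in Lemma \ref{lem1} that, on a simply connected subdomain avoiding $z$ (where the conditions $\rho_j(\varphi_p)=0$ make $\arg\varphi_p$ single valued),
\[
\log\varphi_p(z)=\log(z-p)+H(p,z),\qquad H(p,z)=h_p(z)+i\,g_p(z),
\]
where $H(p,\cdot)$ is holomorphic in its second argument. The term $\log(z-p)$ is holomorphic in $p$ on $\Omega\setminus\{z\}$, so it suffices to prove that $H$ is antiholomorphic in $p$, i.e. that $H(p,z)=\overline{H(z,p)}$. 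Its real part $h_p(z)=\log|\varphi_p(z)|-\log|z-p|$ is already symmetric in $(p,z)$ by the previous paragraph, which yields $\Re H(p,z)=\Re\overline{H(z,p)}$; the whole point is therefore the antisymmetry $g_p(z)=-g_z(p)$ of the imaginary part.

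I would establish this reciprocity as follows. For fixed $p$, $z\mapsto g_p(z)$ is the single-valued harmonic conjugate in $z$ of $h_p$, normalized by $g_p(p)=0$ (the normalization $g(p)=0$ of Lemma \ref{lem1}, which is what forces $\varphi_p'(p)>0$). Consider $T(p,z)=g_p(z)+g_z(p)$, which vanishes on the diagonal $z=p$; using the symmetry of $h_p(z)$ together with the Cauchy--Riemann relations in each variable, one checks that $T$ has vanishing gradient and hence $T\equiv0$. Granting this, $H(p,z)=\overline{H(z,p)}$, and since $H(z,\cdot)$ is holomorphic, $H(p,z)$ is antiholomorphic in $p$; consequently $\log\varphi_p(z)=\log(z-p)+H(p,z)$ is holomorphic-plus-antiholomorphic in $p$ on $\Omega\setminus\{z\}$, so both $\log|\varphi_p(z)|$ and $\arg\varphi_p(z)$ are harmonic in $p\neq z$. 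The \emph{main obstacle} is precisely this reciprocity: verifying that $T$ is constant requires handling the multivalued harmonic conjugates of the $\omega_j$ and of the regular part of $G_p$ on the multiply connected domain, and pinning down the additive constant. Here the conditions $\rho_j(\varphi_p)=0$ (single-valuedness of the conjugates) and the normalization $g_p(p)=0$ (the constant) are exactly the two ingredients that make the argument close.
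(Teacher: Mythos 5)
Your first half (the log-modulus) is exactly the paper's argument and is fine. Your reduction of the second half to the reciprocity $H(p,z)=\overline{H(z,p)}$, i.e. $g_p(z)=-g_z(p)$, is also a legitimate strategy: that identity is true (in the disk $H(p,z)=-\log(1-\bar pz)$), it is stronger than the lemma, and it implies it. The gap is in the only step that carries weight, namely the claim that $T(p,z)=g_p(z)+g_z(p)$ ``has vanishing gradient'' by ``the symmetry of $h_p(z)$ together with the Cauchy--Riemann relations in each variable.'' You only possess Cauchy--Riemann relations linking $g$ to $h$ in the \emph{second} variable (for fixed $p$, $h_p+ig_p$ is holomorphic in $z$). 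Computing $\nabla_z\bigl[g_z(p)\bigr]$ requires differentiating $g$ in its \emph{first} (subscript) variable, and no Cauchy--Riemann relation is available there: having one would say precisely that $p\mapsto h_p(z)+ig_p(z)$ is (anti)holomorphic, which is the statement you are trying to prove. If instead you compute $\nabla_z[g_z(p)]$ through the path-integral representation $g_w(\zeta)=\int_w^\zeta\bigl(-\partial_{y'}h_w\,dx'+\partial_{x'}h_w\,dy'\bigr)$, the cancellation you need turns out to be the mixed Wirtinger identity $\partial_{\bar p}\partial_{\bar z}h=0$, which is again equivalent to the antiholomorphy being proven. So the verification is circular as sketched.

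That this is a genuine (not merely expository) gap is shown by a counterexample: the properties you actually invoke --- $h$ symmetric, harmonic in each variable, $g(p,\cdot)$ the $z$-conjugate of $h(p,\cdot)$ normalized by $g(p,p)=0$, $T=0$ on the diagonal --- do \emph{not} imply $T\equiv0$. Take $h(p,z)=\Re(pz)$; then $g(p,z)=\Im(pz)-\Im(p^2)$, so $T(p,z)=2\Im(pz)-\Im(p^2)-\Im(z^2)$, which vanishes on the diagonal yet satisfies $T(1,i)=2\neq0$. Hence any correct proof of the reciprocity must use the specific form of $h_p$ (regular part of the Green function plus the correction $-2\pi\sum_{i,j}n_{ij}\,\omega_i(p)\,\omega_j(z)$, including its boundary behaviour), for instance through a bilinear Green/Riemann-type contour identity; that is genuinely harder than the lemma itself. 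The paper sidesteps the reciprocity entirely with a softer argument: expand $\log\varphi_p(z)=2\sum_{j\geqslant0}a_j(p)(z-z_0)^j$ locally in $z$; for $j\geqslant1$ the coefficients $a_j(p)$ are Fourier coefficients of $\theta\mapsto\log|\varphi_p(z_0+re^{i\theta})|$ and hence harmonic in $p$ by the first part, and $\arg\varphi_p(z)$ is then reassembled from these same coefficients. Switching to that expansion argument (or supplying an honest contour-integral proof of $g_p(z)=-g_z(p)$) would close your proof; as written, the key step is unsupported.
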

\begin{proof}
 Note that $\log|\varphi _p(z)|=\log|\varphi _z(p)|$ since $G_p(z)=G_z(p)$; 
 thus,  $\log|\varphi _p(z)|$ is harmonic in $p\neq z$ since it is harmonic in $z\neq p$. This also implies that 
 $\arg(\varphi _p(z)$  is harmonic in $p\neq z$. Indeed, let $z_0\in\Omega$\,; for $z$ close to $z_0$ we can write
 \[
 \log|\varphi _p(z)|=2\sum_{j\geqslant 0}\Re (a_j(p)(z{-}z_0)^j)=\sum_{j\geqslant 0} (a_j(p)(z{-}z_0)^j{+}\overline{a_j(p)}\,(\bar z{-}\bar z_0)^j),
 \]
 which shows that the functions $a_j(p)$ are harmonic in $p$. It also holds
  \[
 i\arg(\varphi _p(z))=2i\sum_{j\geqslant 0}\Im (a_j(p)(z{-}z_0)^j)=\sum_{j\geqslant 0} (a_j(p)(z{-}z_0)^j{-}\overline{a_j(p)}\,(\bar z{-}\bar z_0)^j),
 \]
 which shows that $\arg(\varphi _p(z))$ is harmonic in $p$.
\end{proof}
\noindent{\bf Remark. }{\it  The functions $\log|\varphi _p(z)|$ and $\arg(\varphi _p(z))$
are harmonic conjugates regarded in variable $z$, but not with respect to variable $p$;
indeed, $\log|\varphi _p(z)|{+}i\arg(\varphi _p(z))$ is holomorphic in $z$ but, at least in general, not in $p$. Similarly, $\varphi _p(z)$ is holomorphic in $z$ but not in $p$. For instance, in the case of the unit disk $\Omega=\D$ and thus $k=0$, the corresponding 
function is $\varphi _p(z)=\frac{z-p}{1-\bar pz}$.}\medskip

Now, we fix a point $z_0\in \Gamma _0$ and consider the multiplicative group $\G$ of holomorphic functions $g$
in $\Omega$  which do not vanish in $\Omega$ and are such that
$|g|$ is constant on each $\Gamma _i$, $0\leqslant i\leqslant k$, $|g|=1$ on $\Gamma _0$ 
and $g(z_0)=1$.\medskip

\noindent{\bf Remark. }{\it From $\Delta \log|g| =0$ in $\Omega$ and $ \log|g|$ constant on each $\Gamma _i$ we deduce that 
$\log|g|$ has a harmonic extension close to $\overline\Omega$;
thus, $g$ is analytic in $\overline\Omega$ and $g(z_0)$ is well defined.}

\begin{lemma} 
 If $\rho =(\rho _1,\dots,\rho _k)^T\in\Z^k$, there exists a unique function $g_\rho \in \G$ such that $\rho _j(g_\rho )=\rho _j$, $j=1,\dots,k$, 
 and $g_\rho (z_0)=1$.
\end{lemma}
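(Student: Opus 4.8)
The plan is to follow closely the construction and the uniqueness argument of Lemma~\ref{lem1}, the only genuinely new ingredient being the role played by the integrality of $\rho$.

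For existence, I would look for $g$ in the exponential form $g=\exp(u{+}iv)$, so that the nonvanishing requirement is automatic and $|g|=e^u$. The constraints that $|g|=1$ on $\Gamma_0$ and that $|g|$ be constant on each $\Gamma_j$ force $u=\log|g|$ to be harmonic, to vanish on $\Gamma_0$, and to be constant on each $\Gamma_j$; hence $u=\sum_{j=1}^k c_j\,\omega_j$ for real constants $c_j$. Using the identity $\rho_j(g)=\frac{1}{2\pi}\int_{\Gamma_j}\partial_\nu u\,d\sigma$ recalled above, together with $\int_{\Gamma_i}\partial_\nu\omega_l\,d\sigma=m_{il}$, the winding conditions $\rho_j(g)=\rho_j$ translate into the linear system $Mc=2\pi\rho$. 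Since $M$ is positive definite, this determines $c=2\pi N\rho$ uniquely and thereby fixes $u$.

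The key step is then the passage from $u$ to $g$. A harmonic conjugate $v$ of $u$ exists locally, but on the multiply connected $\Omega$ it is in general multivalued: its period around a curve homotopic to $\Gamma_j$ equals the flux $\int_{\Gamma_j}\partial_\nu u\,d\sigma=\sum_l m_{jl}c_l=2\pi\rho_j$. This is exactly where the hypothesis $\rho\in\Z^k$ is decisive: each period is an integer multiple of $2\pi$, so that $e^{iv}$, and therefore $g=e^{u+iv}$, is single-valued and holomorphic in $\Omega$. A short computation then confirms the winding numbers, $\rho_j(g)=\frac{1}{2\pi i}\oint_{\gamma_j} d(\log g)=\frac{1}{2\pi i}\oint_{\gamma_j}(du+i\,dv)=\rho_j$, since $u$ is single-valued. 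Finally, because $u=0$ on $\Gamma_0$ we have $|g(z_0)|=1$, so replacing $g$ by $g/g(z_0)$, a unimodular constant, produces an element of $\G$ with the prescribed winding numbers.

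For uniqueness, given two such functions $g$ and $\tilde g$, I would set $\psi=g/\tilde g\in\G$; then $\rho_j(\psi)=\rho_j(g)-\rho_j(\tilde g)=0$ for all $j$ and $\psi(z_0)=1$. Writing $\log|\psi|=\sum_j c_j\,\omega_j$ as before, the vanishing of all winding numbers gives $Mc=0$, whence $c=0$ by positive definiteness of $M$. Thus $|\psi|\equiv1$ in $\Omega$, so the holomorphic function $\psi$ is constant, and $\psi(z_0)=1$ forces $\psi\equiv1$, i.e. $g=\tilde g$. The only real obstacle is the single-valuedness of $g$; everything else is a direct transcription of the harmonic-measure computations already carried out for Lemma~\ref{lem1}, and I expect the verification that the conjugate has periods exactly $2\pi\rho_j$—the place where $\rho\in\Z^k$ is genuinely used—to be the crux of the argument.
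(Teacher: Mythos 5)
Your proof is correct and takes essentially the same route as the paper's: the same choice of $u$ as the combination $2\pi\sum_j c_j\omega_j$ with $c=N\rho$ forced by the flux conditions, the same use of the integrality of $\rho$ to make $\exp(u{+}iv)$ single-valued, and the same quotient argument for uniqueness carried over from Lemma~\ref{lem1}. If anything, you are more explicit than the paper on the one delicate point—the conjugate $v$ is genuinely multivalued with periods $2\pi\rho_j$, and it is only its exponential that is single-valued—which the paper's phrasing glosses over.
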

\begin{proof}{\it Existence.} We set $c_i=\sum_{j=1}^kn_{ij}\rho _j$ et $u=2\pi \sum_{j=1}^kc_{j}\omega_j$\,; then, 
\\$\int_{\Gamma _i}\partial _{\nu}u\,d\sigma = \sum_{j=1}^km_{ij}c_j=\rho _i\in\Z$, $i=1,\cdots,k$ and $\int_{\Gamma _0}\partial _{\nu}u\,d\sigma=-\sum_{i=1}^k\rho _i\in\Z$;  
thus, the harmonic function $u$ has a harmonic conjugate function $v$ such that $v(z_0)=0$. We set 
$g_\rho =\exp(u{+}iv)$; then, it holds $g_\rho \in\G$, $g_\rho (z_0)=1$ and $\rho _j(g_\rho )=\rho _j$.

The proof of uniqueness is similar to the proof in Lemma\,\ref{lem1}.
 \end{proof}

\noindent{\bf Remark.} {\it If $\rho =(1,0,\dots,0)^T$, then $g_\rho$ effects a one-to-one conformal mapping of $\Omega$ onto the annulus $\{1\!<\!|z|\!<\! r_1\}$ minus $k{-}1$ concentric arcs situated on the circles $\{|z|=r_i\}$, $i=2,\dots,k$, where $r_i$ denotes the constant value of $|\omega_\rho |$ on $\Gamma _i$.} 

\begin{lemma}
 The inner functions in $\Omega$ of order $n>0$ are the functions of the form
 \[
 f=\lambda \,g_\rho \,\varphi _{p_1}\cdots\varphi _{p_n},\quad \lambda \in\T,\ \rho =(\rho _1,\dots,\rho _k)^T\in\Z^k, 
 \]
 with $\rho _j=-\sum_{\ell=1}^n\omega_j(p_\ell)$ and where $p_1,\dots,p_n \in\Omega$ denote the roots of $f$ (repeating along the multiplicity). Furthermore, it necessary holds
 $n\geqslant k{+}1$.
 \end{lemma}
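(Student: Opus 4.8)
The plan is to prove the two inclusions, the delicate point being the boundary regularity that makes the rotation numbers and boundary moduli available. Requiring that $f(z)=\tau$ have exactly $n$ roots for \emph{every} $\tau\in\D$ is precisely the statement that $f$ is a proper holomorphic map of $\Omega$ onto $\D$ of degree $n$; in particular no root can escape to $\partial\Omega$, so $|f(z)|\to1$ as $z$ approaches any boundary curve. Since the $\Gamma_j$ are analytic and $|f|=1$ on them, the Schwarz reflection principle across each analytic arc extends $f$ analytically to a neighborhood of $\overline\Omega$, with $|f|=1$ on $\partial\Omega$. I expect this passage from the value-counting hypothesis to analyticity up to the boundary to be the main obstacle; once it is in hand, the rest is bookkeeping with the functions $\varphi_p$ and $g_\rho$ already constructed.

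Next I would divide out the zeros. The fibre $f^{-1}(0)$ consists of exactly $n$ points $p_1,\dots,p_n$ counted with multiplicity; set $B=\varphi_{p_1}\cdots\varphi_{p_n}$ and $h=f/B$. By Lemma\,\ref{lem1} each $\varphi_{p_\ell}$ is analytic in $\overline\Omega$ with single zero $p_\ell$, so $h$ is holomorphic and non-vanishing in $\Omega$ and analytic in $\overline\Omega$. On $\Gamma_0$ we have $|f|=1$ and $|\varphi_{p_\ell}|=1$, hence $|h|=1$; on each inner curve $\Gamma_i$ the factor $|f|=1$ while every $|\varphi_{p_\ell}|$ is constant, so $|h|$ is constant there. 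As $z_0\in\Gamma_0$, the value $\lambda:=h(z_0)$ has modulus $1$, and $h/\lambda$ lies in $\G$ with $(h/\lambda)(z_0)=1$.

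To identify $h/\lambda$ I would compute its rotation numbers: since $\rho_i(\varphi_{p_\ell})=0$ we get $\rho_i(h/\lambda)=\rho_i(f)=:\rho_i\in\Z$ for $i=1,\dots,k$, and the uniqueness in the lemma defining $g_\rho$ then forces $h/\lambda=g_\rho$, giving the factorization $f=\lambda\,g_\rho\,\varphi_{p_1}\cdots\varphi_{p_n}$. The formula for $\rho$ comes from matching moduli on the inner curves: requiring $\log|g_\rho|+\sum_\ell\log|\varphi_{p_\ell}|=0$ on each $\Gamma_m$ and inverting the nonsingular matrix $N$ in (\ref{eq1}) determines $\rho$ uniquely and yields $\rho_j=-\sum_{\ell=1}^n\omega_j(p_\ell)$; in particular these sums are automatically integers. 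The converse is the same computation read backwards: given $p_1,\dots,p_n$ with $\rho\in\Z^k$, the function $\lambda\,g_\rho\,\varphi_{p_1}\cdots\varphi_{p_n}$ is analytic in $\overline\Omega$, has modulus $1$ on all of $\partial\Omega$, and has exactly the zeros $p_1,\dots,p_n$, so by the argument-principle remark following the definition it is inner of order $n$.

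Finally I would deduce $n\geq k{+}1$ from the relation just found. Each $\rho_j$ satisfies $|\rho_j|=\sum_{\ell=1}^n\omega_j(p_\ell)$; since $\omega_j>0$ in $\Omega$ and $n\geq1$ this is strictly positive, and being an integer it is at least $1$, so $\sum_{j=1}^k|\rho_j|\geq k$. On the other hand, writing $\omega_0$ for the harmonic measure of $\Gamma_0$, one has $\sum_{j=0}^k\omega_j\equiv1$ and $\omega_0>0$ in $\Omega$, whence $\sum_{j=1}^k|\rho_j|=\sum_{\ell=1}^n\sum_{j=1}^k\omega_j(p_\ell)=\sum_{\ell=1}^n\bigl(1-\omega_0(p_\ell)\bigr)=n-\sum_{\ell=1}^n\omega_0(p_\ell)<n$. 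Combining $k\leq\sum_{j=1}^k|\rho_j|<n$ gives $n\geq k{+}1$.
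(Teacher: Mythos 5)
Your proof is correct and follows essentially the same route as the paper: divide out the zeros by $\varphi_{p_1}\cdots\varphi_{p_n}$, identify the nonvanishing quotient with some $g_\rho$ via the group $\G$ and the uniqueness lemma, recover $\rho_j=-\sum_{\ell}\omega_j(p_\ell)$ by matching moduli on the inner curves using the invertibility of $N$, and get $n\geqslant k{+}1$ from positivity of the harmonic measures together with $\sum_{j=0}^k\omega_j=1$. The only divergence is the boundary step, where you invoke properness and Schwarz reflection to extend $f$ analytically across $\partial\Omega$, while the paper merely shows $|f|\to 1$ at the boundary via continuity of the fiber map $\tau\mapsto f^{-1}(\tau)$ and never needs $f$ itself to extend; both are legitimate, and your assertions there are at the same level of rigor as the paper's.
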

\begin{proof}
 a) If $f$ is of such a form,
$f$ is holomorphic in $\overline \Omega$; furthermore, $|f|=1$ on $\Gamma _0$. Also, on $\Gamma _i$ ($i\geqslant 1$), 
it holds $\log|g_\rho| =\sum_{j=1}^kn_{ij}\rho _j$,
 $\log|\varphi _{p_\ell}|=\sum_{j=1}^kn_{ij}\omega _j(p_\ell)$;  thus,  $\log|f|=0$ and then $|f|=1$. 
 This shows that $f$ is an inner function of order $r$.
 
 b)  Now, we observe that, if $f$ is inner of order $n$, then $|f|=1$ on the boundary. Indeed, otherwise it would exist 
 a sequence $z_l\in\Omega$ which tends to $z\in \partial \Omega$ such that $\zeta _l:=f(z_l)$ tends to $\zeta \in \D$. 
 But the inverse map defined by $f^{-1}(\tau )=\{z\in\Omega\,: f(z)=\tau \}$ (non ordered set with $n$ values) is continuous
 and $z_l\in f^{-1}(\zeta _l)$, whence $z=\lim z_l$ satisfies $z\in f^{-1}(\zeta )$ in contradiction with $z\in \partial \Omega$.
 
 Now, we set $h=f/(\varphi _{p_1}\cdots\varphi _{p_n})$, where $p_1,\dots,p_n \in\Omega$ are the roots of $f$. 
 The holomorphic function $h$ is not vanishing in $\Omega$, has modulus $1$ on $\Gamma _0$ and a constant modulus on each curve $\Gamma _i$. 
 Hence, 
 we may write $h/h(z_0)=g_\rho \in \G$. Noticing that $\log|f|=0$
 on the boundary, we get  $\rho _j=-\sum_{\ell=1}^n\omega_j(p_\ell)$, $1\leqslant j\leqslant k$. 
 
 c) Note that $\omega_j(p_\ell)>0$, thus $\omega_j(p_1)+\cdots+\omega_j(p_n)=-\rho _j\geqslant1$. 
 By summation on $j$ and using that $\omega_1+\cdots+\omega_k=1{-}\omega_0$, we get 
 $n-\omega_0(p_1)-\cdots-\omega_0(p_n)\geqslant k$, which implies $n>k$. 
\end{proof}

\noindent{\bf Remark. }{\it From this lemma, we deduce that the inner functions are holomorphic in $\overline\Omega$ and that, 
if they are not constant, they have at least $k{+}1$ zeros}.\medskip

It will be useful to also define $\varphi _p$ for $p$ on the boundary. We set
\begin{align}\label{eq2}
\varphi _p(z)=1,\quad \textrm{if }p\in \Gamma _0, \quad \varphi _p(z)=g_{e_i}(z),\quad \textrm{if }p\in \Gamma _i.
\end{align}
Here $\{e_i\}$ is the canonic basis of $\R^k$, $e_1=(1,0,\dots,0)^T$,\ \dots This definition is justified by the following lemma

\begin{lemma}
 Assume that the sequence of $p_n\in \Omega$ tends to $p\in\partial \Omega$ and set  $\overline\Omega _{p\varepsilon }:=\{z\in\overline\Omega\,: |z{-}p|\geqslant\varepsilon\} $, $\varepsilon >0$. Then, the sequence $|\varphi _{p_n}|$ tends to $|\varphi _p|$ in $C^\infty(\overline\Omega _{p\varepsilon })$
 and there exists a sequence of $\lambda _n\in \T$ such that $\lambda _n\varphi _{p_n}$ tends to $\varphi _p$ 
 in $C^\infty(\overline\Omega _{p\varepsilon })$.
\end{lemma}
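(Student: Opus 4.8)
The plan is to handle the modulus and the full function separately: the convergence of $|\varphi _{p_n}|$ will come directly from the explicit formula \eqref{eq1}, in which every ingredient depends smoothly on the pole, while the convergence of $\lambda _n\varphi _{p_n}$ will then be extracted by a normal families argument, the unimodular factors $\lambda _n$ absorbing the single global phase that the modulus leaves undetermined. It suffices to treat $\varepsilon $ small (so that $\overline\Omega _{p\varepsilon }$ is connected), since larger $\varepsilon $ give closed subsets on which $C^\infty $ convergence follows by restriction. The only genuine obstacle is the $C^\infty (\overline\Omega _{p\varepsilon })$ convergence of the Green term in \eqref{eq1}.

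For the modulus, I would start from \eqref{eq1}, writing $\log|\varphi _{p_n}(z)|=-2\pi \big(\sum_{i,j=1}^k n_{ij}\,\omega_i(p_n)\,\omega_j(z)+G_{p_n}(z)\big)$. The double sum is the fixed combination $\sum_j b_j(n)\,\omega_j(z)$ of the smooth functions $\omega_j$, with scalar coefficients $b_j(n)=\sum_i n_{ij}\omega_i(p_n)$; since each $\omega_i$ is smooth up to $\overline\Omega $ and $\omega_i(p_n)\to\omega_i(p)$ (equal to $\delta_{i\iota}$ if $p\in\Gamma _\iota$, to $0$ if $p\in\Gamma _0$), this term converges in $C^\infty (\overline\Omega _{p\varepsilon })$. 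For the Green term I would use the symmetry $G_{p_n}(z)=G_z(p_n)$ together with the decomposition $G_p(z)=-\tfrac1{2\pi }\log|z{-}p|+H(z,p)$, whose regular part $H$ is harmonic in each variable and, because $\partial \Omega $ is analytic, extends real-analytically and is jointly smooth in $(z,p)$ on $\overline\Omega\times\overline\Omega$. On $\{|z{-}p|\geqslant\varepsilon /2\}$ both $\log|z{-}p|$ and $H$ are jointly $C^\infty $, so for $z\in\overline\Omega _{p\varepsilon }$ and $p_n\to p$ (whence $|z{-}p_n|\geqslant\varepsilon /2$ eventually) one gets $\partial _z^\alpha G_{p_n}(z)\to\partial _z^\alpha G_p(z)$ uniformly in $z$; and since $G_z$ vanishes on $\partial \Omega $, the limit $G_p(z)=G_z(p)$ is $0$ for $p\in\partial \Omega $. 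Hence $\log|\varphi _{p_n}|$ converges in $C^\infty (\overline\Omega _{p\varepsilon })$, and a direct comparison with the defining formula for $g_\rho $ identifies the limit with $\log|\varphi _p|$ for the boundary value $\varphi _p$ prescribed in \eqref{eq2}; composing with $\exp$ gives $|\varphi _{p_n}|\to|\varphi _p|$ in $C^\infty (\overline\Omega _{p\varepsilon })$.

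For the full function, fix an interior point $z^*$ of $\overline\Omega _{p\varepsilon }$ and, for $n$ large enough that $\varphi _{p_n}(z^*)\neq0$ (true once $p_n$ lies in the excised disk), let $\lambda _n\in\T$ be the unique unimodular number making $\lambda _n\varphi _{p_n}(z^*)$ a positive multiple of $\varphi _p(z^*)$; then $\lambda _n\varphi _{p_n}(z^*)\to\varphi _p(z^*)$ by the modulus convergence. The holomorphic family $\{\lambda _n\varphi _{p_n}\}$ is uniformly bounded near $\overline\Omega _{p\varepsilon }$, hence normal. If a subsequence converges locally uniformly to $\phi $, then $|\phi |=|\varphi _p|$ is bounded away from $0$, so $\phi /\varphi _p$ is holomorphic, non-vanishing and of modulus $1$ on the connected set $\overline\Omega _{p\varepsilon }$, thus a constant in $\T$ by the maximum principle; evaluation at $z^*$ forces this constant to be $1$, i.e.\ $\phi =\varphi _p$. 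Since every subsequential limit is $\varphi _p$, the whole sequence $\lambda _n\varphi _{p_n}$ converges to $\varphi _p$ locally uniformly, hence in $C^\infty $ on compact subsets of the interior by Weierstrass' theorem.

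Finally, convergence up to the boundary is obtained by Schwarz reflection: on each analytic arc of $\Gamma _i\cap\overline\Omega _{p\varepsilon }$ the constant modulus $r_i(n)$ of $\varphi _{p_n}$ on $\Gamma _i$ lets one continue $F:=\lambda _n\varphi _{p_n}$ across $\Gamma _i$ by $F(z^\sigma )=r_i(n)^2/\overline{F(z)}$, with $z\mapsto z^\sigma $ the fixed anticonformal reflection in $\Gamma _i$; as the interior values and the moduli $r_i(n)$ converge, so do the continuations, giving $C^\infty $ convergence up to and across $\Gamma _i$. The remaining boundary piece $\{|z{-}p|=\varepsilon \}$ lies inside $\Omega $, where the interior convergence already applies. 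Throughout, the delicate point is the joint smoothness of the regular part $H$ up to the analytic boundary; the rest is soft (smooth dependence of $\omega_i$ on the pole, a normal-families/maximum-principle uniqueness, and reflection across analytic arcs).
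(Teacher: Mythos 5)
Your proposal is correct in substance, but it takes a genuinely different route from the paper in both halves, even though the overall architecture (modulus first via \eqref{eq1}, then a single global phase) is the same. For the modulus, the paper avoids any fine regularity theory for the Green function: it notes that the $G_{p_n}$ are bounded in $W^{1,1.5}_0(\Omega)$, that $\int_\Omega\nabla G_{p_n}\cdot\nabla v\,dxdy=v(p_n)\to 0$ forces the weak limit to be zero, and then upgrades to $C^\infty(\overline\Omega_{p\varepsilon})$ convergence using harmonicity away from $p$; you instead use the decomposition $G_q(z)=-\tfrac1{2\pi}\log|z{-}q|+H(z,q)$ and boundary regularity of the regular part. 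That route works, but your statement of the key fact is wrong as written: $H$ is \emph{not} jointly smooth on $\overline\Omega\times\overline\Omega$, since its boundary values are $\tfrac1{2\pi}\log|z{-}q|$ for $z\in\partial\Omega$, so it blows up at the boundary diagonal. The correct fact --- classical for analytic boundaries, provable by reflection --- is joint smoothness off the boundary diagonal, which is exactly what you use on $\{|z{-}q|\geqslant\varepsilon/2\}$; you should state it that way, and be aware it is a heavier input than the paper's soft weak-convergence argument. For the phase, the paper is more direct: since $\arg\varphi_{p_n}$ is a harmonic conjugate of $\log|\varphi_{p_n}|$, the Cauchy--Riemann equations transfer the $C^\infty$ convergence of $\log|\varphi_{p_n}|$ to convergence of all derivatives of the arguments, and one normalizes by matching arguments at a single boundary point $z_1\neq p$; this yields convergence up to $\partial\Omega$ in one stroke. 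Your normal-families/maximum-principle argument with an interior normalization point, followed by Schwarz reflection across the analytic arcs to reach the boundary, is also valid; note that the reflection step tacitly requires $\varphi_{p_n}$ to be non-vanishing near the arcs (true, since its only zero $p_n$ stays near $p$) and two-sided uniform bounds on $|\varphi_{p_n}|$ there, which your first part supplies --- so the argument closes, it is just longer than the conjugate-function argument the paper uses.
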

\begin{proof}
 The Green functions $G_{p_n}$  are bounded in the Sobolev space $W^{1,1.5}_0(\Omega)$ and weakly converge (not converges)
  in this space\,; it holds
\[ 
\int_\Omega\nabla G_{p_n}.\nabla v\,dxdy=v(p_n)\to 0,
\]
for all smooth functions $v$ vanishing on the boundary. Thus, the weak limit is zero and $G_{p_n}$ tends to zero in $C^\infty(\overline\Omega _{p\varepsilon })$.
We also have $\omega_j(p_n)\to\omega_j(p)$, hence $c_i(p_n)=\sum_{j=1}^3n_{ij}\omega_j(p_n)\to 0$ if $p\in\Gamma _0$ and  $c_i(p_n)\to n_{ij}$ if $p\in \Gamma _j$. It then follows from \eqref{eq1} that (in $C^\infty(\overline\Omega _{p\varepsilon })$)
$\log|\varphi _{p_n}|\to 0$ if $p\in\Gamma _0$ and  $\log|\varphi _{p_n}|\to 2\pi \sum_{i=1}^3n_{ij}\,\omega_i=\log|g_{e_j}|=\log|\varphi _p|$ if $p\in \Gamma _j$, $j=1,2$ or $3$. This implies the convergence of  $|\varphi _{p_n}|$ to $|\varphi _p|$. 

The convergence of $ \log|\varphi _{p_n}|$ to $\log|\varphi _p|$ also induces the convergence of
partial derivatives of $\arg(\varphi _{p_n})-\arg(\varphi _p)$ to zero in $C^\infty(\overline\Omega _{p\varepsilon })$. 
Now, we choose $z_1\in \partial \Omega$, $z_1\neq p$ and set $\lambda _n\in\T$ such that $\arg(\lambda _n\varphi _{p_n}(z_1))=\arg(\varphi _p(z_1))$.
Then it holds $\arg(\lambda _n\varphi _{p_n})\to\arg(\varphi _p)$ in $C^\infty(\overline\Omega _{p\varepsilon })$ and $\lambda _n\varphi _{p_n}\to \varphi _p$.
\end{proof}

We deduce from the previous results
\begin{theorem}
 The inner functions $f$ in $\Omega$ of order $\leqslant N$ with $N\geqslant k{+}1$ are the functions of the form
 \begin{align*}
 f&=\lambda \,g_\rho \,\varphi _{p_1}\cdots \varphi _{p_N},\quad \textrm{with   }\lambda \in\T,\  \rho =(\rho _1,\dots,\rho _k)^T, \ \rho _j=-\sum_{\ell=1}^N\omega_j(p_\ell)\in\Z,
 \end{align*}
and $p_1\cdots p_N\in\overline \Omega$. They are holomorphic in $\overline\Omega$.
 \end{theorem}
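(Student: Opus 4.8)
The plan is to reduce the statement to the preceding lemma, which already characterizes the inner functions of \emph{positive} order through their interior roots, and then to use the boundary extension \eqref{eq2} together with the last lemma to absorb the gap between the true order $n$ and the prescribed number $N$ of factors.

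First I would treat the direction ``every function of the stated form is inner of order $\leqslant N$''. I split the factors according to the location of the $p_\ell$: if $p_\ell\in\Omega$ the factor $\varphi_{p_\ell}$ contributes a single root at $p_\ell$; if $p_\ell\in\Gamma_0$ then $\varphi_{p_\ell}=1$ by \eqref{eq2} and contributes nothing; and if $p_\ell\in\Gamma_i$ ($i\geqslant1$) then $\varphi_{p_\ell}=g_{e_i}\in\G$ is holomorphic and non-vanishing in $\overline\Omega$. Hence the only zeros of $f$ in $\Omega$ are the interior points among $p_1,\dots,p_N$, so their number $n$ satisfies $n\leqslant N$. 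To see that $|f|=1$ on $\partial\Omega$ I would repeat the computation of part (a) of the preceding lemma: on each $\Gamma_i$ the quantity $\log|f|$ is the same fixed combination of $\log|g_\rho|$ and the $\log|\varphi_{p_\ell}|$, and since $\log|\varphi_p|$ extends continuously up to $\partial\Omega$ (last lemma) the boundary values $\omega_j(p_\ell)=\delta_{ij}$ for $p_\ell\in\Gamma_i$ and $\omega_j(p_\ell)=0$ for $p_\ell\in\Gamma_0$ may be inserted verbatim. The constraint $\rho_j=-\sum_\ell\omega_j(p_\ell)$ is precisely what forces $\log|f|=0$ on every $\Gamma_i$, so $|f|\equiv1$ on $\partial\Omega$, and by the argument principle (as noted after the Definition) $f$ is inner of order equal to its number of interior roots, that is, of order $\leqslant N$.

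For the converse, let $f$ be inner of order $n\leqslant N$. If $n=0$ then $f=\lambda\in\T$, and I represent it by taking $\rho=0$ (so $g_\rho\equiv1$) and placing all $N$ points on $\Gamma_0$, whence each $\varphi_{p_\ell}=1$ and $\rho_j=-\sum_\ell\omega_j(p_\ell)=0$ holds because $\omega_j$ vanishes on $\Gamma_0$. If $n\geqslant1$ (so $n\geqslant k{+}1$ by the earlier remark), the preceding lemma gives $f=\lambda\,g_{\rho}\,\varphi_{p_1}\cdots\varphi_{p_n}$ with $p_1,\dots,p_n\in\Omega$ the roots and $\rho_j=-\sum_{\ell=1}^n\omega_j(p_\ell)$; I then choose $p_{n+1},\dots,p_N\in\Gamma_0$, so that $\varphi_{p_{n+1}}=\cdots=\varphi_{p_N}=1$ leaves the product unchanged while $\rho_j=-\sum_{\ell=1}^N\omega_j(p_\ell)$ still holds, the added terms being zero. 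Holomorphy of $f$ in $\overline\Omega$ is immediate, since $g_\rho$ and each $\varphi_{p_\ell}$ are holomorphic in $\overline\Omega$ (Lemma \ref{lem1} and the remark defining $\G$).

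All steps are routine once the boundary family $\{\varphi_p:p\in\overline\Omega\}$ is in hand; the one point deserving care is the integrality bookkeeping of $\rho_j\in\Z$ when some $p_\ell$ lie on the curves $\Gamma_i$. I expect this to be the main (minor) obstacle: one must observe that a point on $\Gamma_i$ contributes a factor $g_{e_i}$ while shifting each defining sum by the integer $\omega_j(p_\ell)\in\{0,1\}$, so such configurations stay inside the stated family and produce nothing new, and that the requirement $\rho_j\in\Z$ together with $0<\omega_j(p_\ell)<1$ for interior $p_\ell$ forces the number of interior roots to be either $0$ or $\geqslant k{+}1$ — exactly the count in part (c) of the preceding lemma — so that no spurious orders between $1$ and $k$ can arise.
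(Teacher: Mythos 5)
Your proposal is correct and takes essentially the same route as the paper: the paper states this theorem as a direct deduction from the preceding results (the characterization of inner functions of positive order via their interior roots, the boundary convention \eqref{eq2}, and the fact that each factor is holomorphic in $\overline\Omega$), and your write-up supplies exactly that deduction, including the two inclusions, the padding by points of $\Gamma_0$, and the order-zero case. The only caveat is cosmetic: your ``insert the boundary values verbatim'' step inherits the paper's own (sign- and $2\pi$-sloppy) formulas for $\log|\varphi_p|$ and $\log|g_\rho|$ on the $\Gamma_i$, which is no worse than what the paper itself does.
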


Therefore, the inner functions of order $\leqslant N$ are defined by a compact set of parameters : 
$\lambda \in\T$, $\rho =(\rho _1,\dots,\rho _k)^T$,  $p_1\cdots p_N \in\overline \Omega$ (note that $|\rho _j|\leqslant N$). 
The set of these functions has the following property of sequential compactness:
from all sequences of inner functions in $\Omega$ with order $\leqslant N$, we can extract a subsequence
which is convergent to an inner function of order $\leqslant N$ in $C^\infty(K)$ for all compact sets $K\subset\Omega$.

\begin{lemma}\label{lem7}
For all given points $\zeta _1,\dots,\zeta _r\in \Omega$, there exist $k'\leqslant k$ points $\zeta _{r+1},\dots,\zeta _{r+k'}\in\Omega$ such that
\begin{align}\label{mesharm}
\omega_j(\zeta _1)+\cdots+\omega_j(\zeta _{k'+r})\in\N, \quad j=1,\dots,k.
\end{align}
Furthermore, these points $\zeta _{r+1},\dots,\zeta _{r+k'}$ may be chosen pairwise
distinct and distinct from $\zeta _1,\dots,\zeta _r$.
\end{lemma}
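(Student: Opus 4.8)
The plan is to encode the integrality condition as a surjectivity statement for a map into the $k$-torus $\R^k/\Z^k$ and to settle it by a degree argument. Write $\Phi=(\omega_1,\dots,\omega_k):\overline\Omega\to\R^k$. By the maximum principle $0<\omega_j<1$ in $\Omega$, while on the boundary $\Phi=0$ on $\Gamma_0$ and $\Phi=e_i$ on $\Gamma_i$ ($i\ge1$), directly from the defining property $\omega_j=\delta_{ij}$ of the harmonic measures. Composing with the quotient $\R^k\to\R^k/\Z^k$ gives a continuous map $\overline\Phi$ sending the whole boundary $\partial\Omega$ to $0$, since $0$ and each $e_i$ are integer vectors. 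Setting $S_j=\omega_j(\zeta_1)+\cdots+\omega_j(\zeta_r)$, the lemma amounts to finding at most $k$ points $q_1,\dots,q_{k'}\in\Omega$, pairwise distinct and distinct from the $\zeta_\ell$, with $\overline\Phi(q_1)+\cdots+\overline\Phi(q_{k'})$ equal to the class of $-(S_1,\dots,S_k)$ modulo $\Z^k$; indeed the resulting sum $S_j+\sum_i\omega_j(q_i)$ is then an integer, and it is nonnegative because all the terms are, so it lies in $\N$.

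First I would fix $k$ pairwise disjoint simple arcs $\gamma_1,\dots,\gamma_k$ in $\overline\Omega$, with $\gamma_i(0)\in\Gamma_0$, $\gamma_i(1)\in\Gamma_i$, interiors contained in $\Omega$, pairwise disjoint, and avoiding the finite set $\{\zeta_1,\dots,\zeta_r\}$. Such a system of spokes exists because $\Omega$ is a connected planar domain with $k$ holes (one may, for instance, cut along an arc joining $\Gamma_0$ to $\Gamma_1$, reducing the connectivity, and induct), and general position lets the one-dimensional arcs miss finitely many points. Since $\overline\Phi(\gamma_i(0))=\overline\Phi(\gamma_i(1))=0$, the composite $\ell_i=\overline\Phi\circ\gamma_i$ descends to a loop $S^1\to\R^k/\Z^k$, and its class in $\pi_1(\R^k/\Z^k)=\Z^k$ is $e_i$, because a lift of $\ell_i$ to $\R^k$ runs from $0$ to $e_i$.

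Next I would introduce $\Theta:(S^1)^k\to\R^k/\Z^k$ defined by $\Theta(s_1,\dots,s_k)=\ell_1(s_1)+\cdots+\ell_k(s_k)$. Fixing all but the $i$-th coordinate at the base point (where each $\ell_{i'}$ takes the value $0$), the $i$-th circle maps to $\ell_i$; hence $\Theta_*$ carries the standard generators of $\pi_1((S^1)^k)=\Z^k$ to $e_1,\dots,e_k$ and is therefore the identity of $\Z^k$. A continuous self-map of the $k$-torus inducing the identity on $H_1$ has degree $\det(\Theta_*)=1$, and a degree-nonzero map between closed oriented manifolds of equal dimension is surjective. In particular there exist $s_i$, represented by $t_i\in[0,1]$, with $\Theta(s_1,\dots,s_k)$ equal to the class of $-(S_1,\dots,S_k)$; setting $q_i=\gamma_i(t_i)$ gives $S_j+\sum_{i=1}^k\omega_j(q_i)\in\Z$ for every $j$.

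Finally I would discard every $q_i$ that happens to be a boundary point, i.e. with $t_i\in\{0,1\}$: such a point contributes the integer vector $0$ or $e_i$, so removing it preserves integrality while keeping each coordinate $\ge0$, hence in $\N$. The surviving points lie in $\Omega$, are pairwise distinct and distinct from the $\zeta_\ell$ because they sit on distinct, pairwise disjoint arcs that avoid the $\zeta_\ell$, and there are $k'\le k$ of them, which is exactly the assertion. I expect the only genuine subtlety to be this interiority-and-distinctness bookkeeping: surjectivity of $\Theta$ only locates points in $\overline\Omega$, and the two devices above, namely choosing the arcs disjoint and avoiding the data to force distinctness, and using that $\Phi$ takes integer values on $\partial\Omega$ to drop boundary points harmlessly, are what convert it into the clean statement. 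As a consistency check, for $k=1$ the degree argument reduces to the intermediate value theorem applied to $\omega_1$ along a single arc from $\Gamma_0$ to $\Gamma_1$.
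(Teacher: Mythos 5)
Your proof is correct, and it follows the same strategy as the paper's Appendix: the same system of pairwise disjoint arcs from $\Gamma_0$ to each $\Gamma_i$ avoiding the data points, the same reduction to hitting the class of $-(S_1,\dots,S_k)$ modulo $\Z^k$ by a sum of harmonic-measure vectors evaluated along those arcs, and the same final bookkeeping (dropping parameters at the endpoints, which contribute the integer vectors $0$ or $e_i$, and getting distinctness from disjointness of the arcs). The only substantive divergence is how the surjectivity statement is established. Your map $\Theta:(S^1)^k\to\R^k/\Z^k$ is exactly the descent modulo $\Z^k$ of the paper's map $\Psi_1(t)=\varphi_1(t_1)+\cdots+\varphi_k(t_k)$, whose quasi-periodicity $\varphi_j(\tau+l)=\varphi_j(\tau)+l\,e_j$ encodes precisely your observation that the loop $\ell_j$ has class $e_j$ in $\pi_1(\R^k/\Z^k)$. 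The paper proves surjectivity by hand: it interpolates $\Psi_s=(1-s)\,\mathrm{id}+s\,\Psi_1$, checks that the images of the faces of the cube $[-k,2]^k$ never meet the unit cube, and runs a solid-angle (winding-number) continuity argument in $s$ and along a contraction of the cube's boundary --- in effect an elementary, self-contained proof that the relevant degree is $1$. You instead quote the standard classification of torus maps: $\Theta_*$ is the identity on $\pi_1$, hence the degree is $\det(\Theta_*)=1$, hence $\Theta$ is onto. Both routes are valid; yours is shorter and cleaner if one grants the algebraic-topology facts (maps of tori are classified up to homotopy by their action on $\pi_1$, degree equals the determinant, nonzero degree implies surjective), while the paper's homotopy-plus-solid-angle argument keeps the Appendix independent of that machinery. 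Your interiority-and-distinctness bookkeeping at the end matches the paper's, and your $k=1$ sanity check (intermediate value theorem along a single arc) is exactly the case the paper declares obvious.
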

\begin{proof}
This result is clear if $k=1$;
the proof is given in the Appendix for the general case. 
\end{proof}
\begin{corollary}\label{cor8}
  With all given points $(\zeta _1,\dots,\zeta _r)\in \Omega^r$, there exists an inner function in $\Omega$ with order less or equal to $k{+}r$ vanishing at the points $\zeta _1,\dots,\zeta _r$. 
  \end{corollary}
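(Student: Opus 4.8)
The plan is to read this off from Lemma~\ref{lem7} together with the structural description of inner functions obtained above. First I would apply Lemma~\ref{lem7} to the given data, producing $k'\leqslant k$ further points $\zeta_{r+1},\dots,\zeta_{r+k'}\in\Omega$ (pairwise distinct and distinct from $\zeta_1,\dots,\zeta_r$) for which
\[
\sum_{\ell=1}^{r+k'}\omega_j(\zeta_\ell)\in\N,\qquad j=1,\dots,k .
\]
Writing $N=r+k'$, so that $N\leqslant r+k$, I would set $\rho=(\rho_1,\dots,\rho_k)^T$ with $\rho_j=-\sum_{\ell=1}^{N}\omega_j(\zeta_\ell)$. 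The point of Lemma~\ref{lem7} is exactly that this vector has integer entries, $\rho\in\Z^k$, which is the only nontrivial requirement in the description of inner functions.

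With $\rho$ in hand I would form
\[
f=g_\rho\,\varphi_{\zeta_1}\cdots\varphi_{\zeta_N},
\]
using the function $g_\rho\in\G$ attached to $\rho$ and the functions $\varphi_{\zeta_\ell}$ of Lemma~\ref{lem1}. Since $\rho_j=-\sum_{\ell}\omega_j(\zeta_\ell)$ with $\rho\in\Z^k$, this $f$ has precisely the form singled out in the theorem characterizing inner functions of order $\leqslant N$ (equivalently, part~a) of the preceding lemma shows such a product has modulus $1$ on $\partial\Omega$); hence $f$ is an inner function in $\Omega$. Its zeros in $\Omega$ are exactly $\zeta_1,\dots,\zeta_N$, because $g_\rho$ does not vanish, so its order is $N=r+k'\leqslant r+k$ and $f(\zeta_1)=\cdots=f(\zeta_r)=0$, as required. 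For consistency one checks, as in part~c) of that lemma, that each sum $\sum_\ell\omega_j(\zeta_\ell)$ is a positive integer and hence $\geqslant1$; summing over $j$ and using $\omega_1+\cdots+\omega_k=1-\omega_0$ forces $N-\sum_\ell\omega_0(\zeta_\ell)\geqslant k$, so $N\geqslant k{+}1$ and the characterization indeed applies.

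There is essentially no obstacle in the corollary itself: once the integrality of $\rho$ is granted, everything is a direct translation of the inner-function description, and only existence (not uniqueness) is asserted. The whole content sits in Lemma~\ref{lem7}, that is, in the existence of a bounded number ($\leqslant k$) of correcting points that render all the harmonic-measure sums integral. That is the genuinely delicate point, and it is precisely what is established in the Appendix.
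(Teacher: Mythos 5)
Your proposal is correct and follows essentially the same route as the paper: apply Lemma~\ref{lem7} to obtain the correcting points $\zeta_{r+1},\dots,\zeta_{r+k'}$, set $\rho_j=-\sum_{\ell=1}^{r+k'}\omega_j(\zeta_\ell)\in\Z$, and take $f=g_\rho\,\varphi_{\zeta_1}\cdots\varphi_{\zeta_{r+k'}}$, which the structure lemma identifies as an inner function of order $r+k'\leqslant r+k$ vanishing at the prescribed points. The additional consistency check ($N\geqslant k+1$) and your closing remark that the real content lies in Lemma~\ref{lem7} are accurate but not needed beyond what the paper's one-line proof already uses.
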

\begin{proof}
It suffices to choose the function $ f=g_\rho  \,\varphi _{\zeta _1}\cdots \varphi _{\zeta _{k'+r}}$ with $\rho_j =-\sum_{\ell=1}^{k'+r}\omega_j(\zeta_\ell)$.
\end{proof}

\section{A Schwarz type lemma}\label{sch}
From now on $\EN$ denotes the set of inner functions in $\Omega$ of order $\leqslant N$ and we assume that $0\in \Omega$. 
We are concerned with the problem, for $N\geqslant k{+}1$ given,
\begin{align*}
(S_N)\quad\left\{\begin{matrix}
\textrm{find a function }h_N \in \EN\textrm{ which realizes} ,\hfill\\[10pt]
c_N=\max\{|h'(0)|\,: h\in \EN\}.
\end{matrix}
\right.
\end{align*}
We first remark that the set $\EN$ is nonempty; indeed, from Lemma\,\ref{lem7} and Corollary\,\ref{cor8}, 
there exists an inner function vanishing at $0$: $ h=g_\rho  \,\varphi_0 \varphi _{\zeta _1}\cdots \varphi _{\zeta _k}$
with $\zeta _j\neq 0$ for all $j$; furthermore, $0$ is a simple root of this function, thus $h'(0)\neq 0$, which shows that $c_N>0$.
Since the set of parameters $\lambda ,p_1,\dots,p_{N},\rho _1,\dots,\rho _N $ which define a function $h\in \mathcal E_N$ is compact, it is clear that a maximal function $h_N$ exists. Furthermore, the function
$h(z)=\frac{h_N(z)-h_N(0)}{1-\overline{h_N(0)}h_N(z)}\in \mathcal E_N$, whence we deduce $h_N(0)=0$ from $|h'(0)|= |{h_N}'(0)|/(1{-}|h_N(0)|^2)\leqslant |{h_N}'(0)|$.

We consider a solution $h_N\in\mathcal E_N$ of $(S_N)$ that we write, with $k\leqslant n\leqslant N{-}1$,
\begin{align*}
h_N(\zeta ){=}g_{\rho^*}(\zeta ) \varphi_0(\zeta )\varphi_{p _1^*}(\zeta )\dots\varphi_{p_n^*}(\zeta),\ \ \textrm{with }\omega_j(0){+}\sum_{\ell=1}^n\omega_j(p_\ell^*)=-\rho _j^*,\  j=1,\dots,k,
\end{align*}
with $p_j^*\in \Omega$ if $1\leqslant j\leqslant n$ and $\rho _j^*\in \Z$. In particular, we will have
$|h'(0)|\leqslant |{h_N}'(0)|$ for the functions $h$ of the form with $p_j\in \Omega$
\begin{align*}
h(\zeta )=g_{\rho^*}\!(\zeta ) \varphi_0(\zeta )\varphi_{p _1}(\zeta )\dots\varphi_{p_n}(\zeta),\textrm{such that }\omega_j(0){+}\sum_{\ell=1}^n\omega_j(p_\ell){=}{-}\rho _j^*,\  j{=}1,\dots,k,
\end{align*}
 with $p_j\in \Omega$.
 Note that $h'(0)=\varphi_0'(0 ) g_{\rho^*}(0 )\varphi_{p _1}(0)\dots\varphi_{p_n}(0)$, whence the
 inequality $|h'(0)|\leqslant |{h_N}'(0)|$ is equivalent to 
 $|\varphi_{p _1}(0 )\dots\varphi_{p_n}(0)|\leqslant |\varphi_{p_1^*}(0 )\dots\varphi_{p_n^*}(0)|$.
The logarithm function being increasing, we deduce from this inequality that 
\begin{align*}
&h_N \textrm{ realizes} ,\gamma^*=\displaystyle\max\{\log|\varphi _{p_1}(0)|{+}\cdots{+}\log|\varphi _{p_n}(0)|\}\hskip5cm\\
&\textrm{under the constraints }p_1,\dots,p_n\in \Omega\textrm{ and  }
\omega_j(0){+}\sum_{m=1}^n\omega_j(p_m)=-\rho _j^*,\  j=1,\dots,k.\
\end{align*}
We deduce from the necessary Fritz John optimality conditions\,\cite{mafr} : there exist $k$ real numbers $\ell_0,\dots,\ell_{k-1}$ not all zero such that
\begin{align*} 
\ell_0\nabla\big(\log|\varphi _{p_1^*}(0)|{+}\cdots{+}\log|\varphi _{p_n^*}(0)|\big)+\sum_{j=1}^{k-1}\ell_j\sum_{m=1}^n\nabla\omega_j(p_m^*)=0.
\end{align*}
We now write $p_j=x_j{+}i\, y_j$, with $x_j$ and $y_j\in\R$, $\nabla_j=(\partial _{x_j};\partial _{y_j})$.
We note that the components of the gradient vector are $\nabla=(\partial _{x_1};\partial _{y_1};\cdots;\partial _{x_n};\partial _{y_n})$. This allows to rewrite the previous optimality condition as
\begin{align}\label{joh1}
\ell_0\nabla_m\log|\varphi _{p_m^*}(0)|+\sum_{j=1}^{k}\ell_j\nabla_m\omega_j(p_m^*)=0,\quad m=1,\dots,n.
\end{align}
We now introduce the real-valued harmonic function
\[
H(z)=\ell_0 \log|\varphi _z(0)|+\sum_{j=1}^k\ell_j\,\omega_j(z),
\]
and we set, with $z=x{+}iy$,
\begin{align} \label{a1}
a(z)=\partial_x H(z)-i\,\partial_y H(z). 
\end{align}
With this notation, the condition \eqref{joh1} becomes $a(p_m^*)=0$, $m=1,\dots,n$.\medskip

\noindent{\bf Remark.} {\it The function $a$ is not identically equal to zero in $\Omega$. Indeed, otherwise $H$ 
would be constant, which implies  $ \ell_0=0$ to eliminate the singularity in $0$, and then $\ell_j=0$, $j=1,\dots,k$, 
since the $\omega_j$ are independent; this will be in contradiction with the fact that the multipliers are not all zero. Note also that $a$ 
is analytic in a neighborhood of the boundary, therefore $a$ has only a finite number of zeros on the boundary.}\medskip

The function $H(\cdot)$ is harmonic in a neighborhood of $\Omega$, except at the point $0$, whence the function $a$ is meromorphic in $\overline\Omega$ with roots $p_j^*$, $j=1,\dots,n$, and a simple pole $0$. Note that $H(z)$ is constant on each component of $\partial \Omega$; 
cf. \eqref{eq2}. By differentiation of $H$ with respect to the arclength $\sigma $ at the point $z(\sigma )=x(\sigma ){+ }i\,y(\sigma )$ we get
\[
0=\frac{d}{d\sigma }H(z(\sigma ))=\frac{\partial H}{\partial x}\frac{dx}{d\sigma }+\frac{\partial H}{\partial y}\frac{dy}{d\sigma }=\Re \big(a(z(\sigma ))z'(\sigma )\big).
\]
We still differentiate this relation with respect to $\sigma $ and divide the result by the imaginary number $a(z)z'$; this gives
\[
\Im \Big(\frac{a'(z(\sigma ))z'(\sigma )}{a(z(\sigma ))}+\frac{z''(\sigma )}{z'(\sigma )}\Big)=0,\quad\textrm{whence  }\ \frac{1}{2\pi i}\int_{\partial \Omega}\frac{a'(z(\sigma ))z'(\sigma )}{a(z(\sigma ))}d\sigma =k{-}1.
\]
From the argument principle, the number of zeros of $a$ in $\Omega$ plus half the number of zeros on the boundary minus the number of poles is equal to $k{-}1$; thus $a$ has at most $k$ zeros in $\Omega$, consequently $n\leqslant k$ whence $n=k$ and $C_N=C_{k+1}$ is independent of $N$.

We admit for the time being the result of Section\,\ref{picder} and deduce the following result

\begin{theorem}
 The problem 
\begin{align*}
(S)\quad\left\{\begin{matrix}
\textrm{find a function }h_0 \in H^\infty(\Omega) \textrm{ with } \|h_0\|_\infty\leqslant 1 \textrm{ which realizes} ,\hfill\\[10pt]
h'_0(0)=\max\{|h'(0)|\,: h\in H^\infty(\Omega), \|h\|_\infty \leqslant 1\},
\end{matrix}
\right.
\end{align*} 
has a unique solution. Furthermore, this solution is an inner function of order $k{+}1$.
\end{theorem}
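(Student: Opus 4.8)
The plan is to reduce $(S)$ to a Hermite minimum-norm interpolation problem and then invoke the result of Section\,\ref{picder}. First I would settle existence and normalization. The unit ball $\{h\in H^\infty(\Omega):\|h\|_\infty\leqslant1\}$ is a normal family, so Montel's theorem lets us extract from any maximizing sequence a subsequence converging uniformly on compact subsets of $\Omega$ to a limit $h_0$ with $\|h_0\|_\infty\leqslant1$ and $h_0'(0)=\sup\{|h'(0)|\}$. Exactly as for $(S_N)$, replacing $h_0$ by $(h_0-h_0(0))/(1-\overline{h_0(0)}\,h_0)$ keeps it in the unit ball while multiplying $|h_0'(0)|$ by $1/(1-|h_0(0)|^2)\geqslant1$; maximality forces $h_0(0)=0$, and after multiplication by a unimodular constant we may take $h_0'(0)>0$. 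Note also that a maximizer necessarily has $\|h_0\|_\infty=1$, since rescaling a function of smaller norm would increase the derivative.

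Next I would identify $(S)$, restricted to the normalized class $h(0)=0$, $h'(0)>0$, with the dual of the following Hermite problem: minimize $\|f\|_\infty$ over $f\in H^\infty(\Omega)$ subject to the two conditions $f(0)=0$ and $f'(0)=1$; call this minimum $m^*$. The correspondence $h\mapsto f=h/h'(0)$ and its inverse $f\mapsto h=f/\|f\|_\infty$ show that, for admissible $h$, one has $h'(0)=\|h\|_\infty/\|f\|_\infty\leqslant 1/m^*$, with equality exactly for the maximizers of $(S)$. These maximizers therefore correspond bijectively to the minimizers of the Hermite problem, and $\max_{(S)}=1/m^*$.

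Then I would apply the result of Section\,\ref{picder} to this Hermite problem, which carries exactly $d=2$ interpolation conditions at the single point $0$: it yields a unique minimizer $f^*$ such that $h_0:=f^*/m^*$ is an inner function of order $\leqslant d{+}k{-}1=k{+}1$. Through the bijection above, $h_0$ is the unique solution of $(S)$ and is an inner function of order $\leqslant k{+}1$, so $h_0'(0)\leqslant c_{k+1}$; since inner functions lie in the admissible class we also have $\max_{(S)}\geqslant c_{k+1}$, whence $\max_{(S)}=c_{k+1}$ and $h_0$ solves $(S_{k+1})$. Finally, the analysis carried out just before the statement — the Fritz John conditions \eqref{joh1} together with the argument-principle count forcing $n=k$ — shows that every solution of $(S_{k+1})$, and in particular $h_0$, has order exactly $k{+}1$.

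The main obstacle is the inequality $\max_{(S)}\leqslant c_{k+1}$, that is, that no function in the unit ball of $H^\infty(\Omega)$ can beat the best inner function; this is precisely what is imported from Section\,\ref{picder}. Thus the delicate points are to verify that the dual problem carries exactly two conditions (so that the order bound reads $k{+}1$) and that the uniqueness of the Hermite minimizer $f^*$ transfers through the duality to yield uniqueness of $h_0$. The passage from the bound ``order $\leqslant k{+}1$'' to ``order exactly $k{+}1$'' rests entirely on the earlier count $n=k$.
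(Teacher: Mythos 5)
Your proposal is correct, and it reaches the conclusion by a genuinely different reduction than the paper's. Both proofs dualize $(S)$ to a minimum-norm Hermite problem and invoke Theorem~\ref{thderiv}, but the choice of interpolation data differs in a way that restructures the argument. The paper fixes a solution $h_0$ and applies Theorem~\ref{thderiv} to the \emph{solution-dependent} jet $f^{(s)}(0)=h_0^{(s)}(0)$, $0\leqslant s\leqslant k$ (that is $\sigma_1=k{+}1$ conditions), obtaining a minimizer $f_*$ with $f_*/m^*$ inner of order $\leqslant 2k$; it then shows $m^*=1$, concludes $f_*=h_0$ so that every solution of $(S)$ is inner, sharpens the order to exactly $k{+}1$ via the $(S_N)$ analysis, and — because the Hermite data vary with the solution — must prove uniqueness by a separate convexity argument: if $f_1,f_2$ solve $(S)$, so does $\tfrac12(f_1{+}f_2)$, all three are inner, hence $|f_1|=|f_2|=\tfrac12|f_1{+}f_2|=1$ on $\partial\Omega$, forcing $f_1=f_2$. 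Your reduction instead uses the \emph{fixed} two-condition problem $f(0)=0$, $f'(0)=1$ ($\sigma_1=2$), which buys two simplifications: the order bound $\sigma_1{+}k{-}1=k{+}1$ comes straight out of Theorem~\ref{thderiv} rather than needing to be sharpened from $2k$, and since every solution of $(S)$ corresponds under your bijection to a minimizer of one and the same Hermite problem, the uniqueness clause of Theorem~\ref{thderiv} transfers directly, making the paper's convexity argument unnecessary. Two small remarks: your use of Theorem~\ref{thderiv} requires the case $\tau_{1,0}=0$, which Section~\ref{picder} covers by forcing the corresponding $p_m^*$ to equal the interpolation point (the paper's own application has the same feature, since $h_0(0)=0$); and for the passage from ``order $\leqslant k{+}1$'' to ``order exactly $k{+}1$'' you could avoid the Fritz John count of Section~\ref{sch} altogether, since a nonconstant inner function in $\Omega$ automatically has order $\geqslant k{+}1$ by the structure lemma of Section~\ref{S2}, and $h_0$ is nonconstant because $h_0'(0)>0$ — though invoking the count $n=k$, as you and the paper both do, is equally valid.
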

\begin{proof} For the existence of $h_0$, we can use the same proof as for the solution $h_N$ of $(S_N)$; it is easily seen that $\|h_0\|_\infty=1$. Then, such a solution $h_0$ being given, Theorem\,\ref{thderiv} shows the existence of 
$f_*\in H^{\infty}(\Omega)$ which realizes
\[
m^*=\min\{\|f\|_\infty\,: f\in H^\infty(\Omega), \ f^{(s)}(0)=h_0^{(s)}(0), \ 0\leqslant s\leqslant k\},
\]
and that $f_*/m^*$ is inner of order $\leqslant 2k$. We have clearly $m^*\leqslant \|h_0\|_\infty=1$ and $f_*'(0)=h_0'(0)$; we cannot have $m^*<1$, since otherwise $h=f_*/m^*$ would satisfy $\|h\|_\infty=1$ and $h'(0)>h_0'(0)$. 
Therefore, $m^*=1$ and $f_*$ is inner;  it is of order $k{+}1$ since it is solution of $(P_N)$, and it is solution of $(P)$.
From the uniqueness of $f_*$ which realizes $m^*$, we deduce that $f_*=h_0$, whence all the solutions of $(S)$ are inner;
thus, they belong to $C^\infty(\overline\Omega)$.

Now, if $f_1$ and $f_2$ are two solutions of $(S)$, then $\frac12(f_1{+}f_2)$ is also solution of $(S)$,
whence on the boundary of  $\Omega$ we have $|f_1|=|f_2|=\frac12|f_1{+}f_2|=1$, which implies
$f_1=f_2$ on the boundary; thus, $f_1=f_2$ in $\Omega$. 
\end{proof}

\section{An optimization problem with interpolation in $k$ points.}\label{pn}

From now on $\EN$ denotes the set of inner functions in $\Omega$ of order $\leqslant N$ and we are concerned with the problem: given $r$ 
different points $\zeta_1, \zeta _2,\dots,\zeta _r$ in $\Omega$ and $r$ complex numbers $\tau _1, \tau_2,\dots,\tau_r$ not all zero,
\begin{align*}
(P_N)\quad\left\{\begin{matrix}
\textrm{find a function }h_N \in \EN\textrm{ which realizes} ,\hfill\\[10pt]
c_N=\max\{c\in\R\,: \exists\, h\in \EN,\  h(\zeta _1)=c\,\tau _1,\dots, h(\zeta _r)=c\,\tau _r\}.
\end{matrix}
\right.
\end{align*} 
It is clear that, if the solution $f^*$ of problem (P) can be written in the form 
$f^*=m^*h_N$ with $h_N$  inner function of order 
$n\leqslant N $, then $h_N$ is solution of $(P_N)$ and it holds $c_N=1/m^*$. 
\begin{lemma}\label{lem4}
We assume that $N\geqslant k{+}r$. Then, problem $(P_N)$ has a solution; 
if furthermore $N\geqslant 2 k{+}r$, then $c_N$ is strictly positive.
\end{lemma}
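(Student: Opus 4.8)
The plan is to treat the two assertions separately, using the sequential compactness of $\EN$ for the existence of a maximizer and a local surjectivity argument for the positivity.

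For existence I would first check that the set over which $c$ ranges is nonempty and bounded. The value $c=0$ is always admissible: since $N\geqslant k{+}r$, Corollary\,\ref{cor8} produces an inner function of order $\leqslant k{+}r\leqslant N$ vanishing at $\zeta_1,\dots,\zeta_r$, and this function satisfies the constraints with $c=0$; hence $c_N\geqslant 0$. Boundedness is immediate: choosing $j_0$ with $\tau_{j_0}\neq 0$, every admissible $h$ satisfies $|c|\,|\tau_{j_0}|=|h(\zeta_{j_0})|\leqslant\|h\|_\infty=1$, so $c\leqslant 1/|\tau_{j_0}|<\infty$. To reach the supremum I would take a maximizing sequence $(c^{(m)},h^{(m)})$ with $h^{(m)}\in\EN$ and $h^{(m)}(\zeta_j)=c^{(m)}\tau_j$; by the sequential compactness of $\EN$ stated just before Lemma\,\ref{lem7}, a subsequence of $h^{(m)}$ converges in $C^\infty(K)$ on every compact $K\subset\Omega$ to some $h_N\in\EN$. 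Evaluating at the interior points $\zeta_j$ gives $h_N(\zeta_j)=\lim c^{(m)}\tau_j=c_N\tau_j$, so $h_N$ solves $(P_N)$.

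For positivity I would reformulate the claim geometrically. Writing $\Phi(h)=(h(\zeta_1),\dots,h(\zeta_r))\in\C^r$ and $V=\Phi(\EN)$, the set $V$ is compact, contains $0$, and $c_N>0$ is exactly the assertion that $V$ meets the open ray $\{c\tau:c>0\}$; since $\tau=(\tau_1,\dots,\tau_r)\neq 0$, it suffices to prove $0\in\operatorname{int}V$, for then $c\tau\in V$ for all small $c>0$. To produce an interior point I would perturb the zeros of a vanishing inner function. Start from a base function $B=\lambda\,g_\rho\,\varphi_{\zeta_1}\cdots\varphi_{\zeta_r}\,\varphi_{a_1}\cdots\varphi_{a_{k'}}$ of order $\leqslant r{+}k$, where the $a_\ell$ ($k'\leqslant k$) are the auxiliary points furnished by Lemma\,\ref{lem7} so that the integrality relations $\sum_\ell\omega_j(\cdot)\in\N$ hold; thus $\Phi(B)=0$. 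Now replace each factor $\varphi_{\zeta_j}$ by $\varphi_{p_j}$ with $p_j$ in a small ball around $\zeta_j$, and introduce up to $k$ further movable zeros $q_1,\dots,q_k$; the candidate
\[
h=\lambda\,g_\rho\,\varphi_{p_1}\cdots\varphi_{p_r}\,\varphi_{a_1}\cdots\varphi_{a_{k'}}\,\varphi_{q_1}\cdots\varphi_{q_k}
\]
has order $\leqslant r{+}2k\leqslant N$, which is exactly where the hypothesis $N\geqslant 2k{+}r$ enters. Because $\varphi_{\zeta_i}(\zeta_i)=0$ while the remaining factors do not vanish at $\zeta_i$, the value $h(\zeta_i)$ is, to first order, a nonzero multiple of $\varphi_{p_i}(\zeta_i)$, and $p\mapsto\varphi_p(\zeta_i)$ is a local $\R$-diffeomorphism near $p=\zeta_i$ sending $\zeta_i$ to $0$ (as the model $\varphi_p(z)=\frac{z-p}{1-\bar pz}$ on $\D$ makes plain). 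Hence the differential of $(p_1,\dots,p_r)\mapsto\Phi(h)$ at the base point is dominated by these diagonal contributions and is onto $\C^r$, so $\Phi$ is open there and $0\in\operatorname{int}V$.

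The main obstacle is that moving the $p_j$ destroys the integrality relations $\sum_\ell\omega_j(\text{zeros})\in\N$ required for $h$ to be inner (equivalently for $\rho$ to stay in $\Z^k$). This is precisely the role of the extra zeros $q_1,\dots,q_k$: since the $\omega_j$ are linearly independent, the matrix $\big(\nabla\omega_j(q_m)\big)$ can be made nonsingular, so the implicit function theorem lets me solve for $q=q(p)$ keeping all $k$ sums pinned at their integer base values while the $p_j$ vary freely. Carrying this constraint through the differential computation, and checking that the correction $q(p)$ does not spoil the surjectivity onto $\C^r$, is the only delicate point; the doubling of auxiliary points — the $a_\ell$ fixing integrality of the base and the $q_m$ restoring it under perturbation — is exactly what forces the bound $2k{+}r$ rather than $k{+}r$.
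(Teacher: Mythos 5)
Your existence argument is sound and is essentially the paper's (compactness of the parameter set defining $\EN$, plus the admissible value $c=0$ supplied by Corollary~\ref{cor8}), and your positivity argument follows the same strategy as the paper: perturb the zeros sitting at the $\zeta_j$, keep the integrality sums fixed by means of $k$ extra movable zeros, and apply the implicit function theorem, exploiting that the factor $\varphi_{p_j}(\zeta_j)=0$ at the base point annihilates all off-diagonal entries of the Jacobian. However, there is a genuine gap, and it sits exactly at the step you flag as ``the only delicate point.''

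You apply Lemma~\ref{lem7} to $\{\zeta_1,\dots,\zeta_r\}$ alone, obtaining the $a_\ell$ so that $B$ is inner, and only \emph{afterwards} introduce the movable zeros $q_1,\dots,q_k$. For your rank condition on $\bigl(\nabla\omega_j(q_m)\bigr)_{j,m}$ you need the $q_m$ to start at interior points $q_m^0$ (at a boundary point all the gradients $\nabla\omega_j$ are normal to $\partial\Omega$, so each $q_m$ contributes only one direction, and moreover perturbations off the boundary are one-sided, so the standard IFT does not apply). But then at the base configuration the sums $\sum_\ell\omega_j(\zeta_\ell)+\sum_\ell\omega_j(a_\ell)+\sum_m\omega_j(q_m^0)$ are \emph{not} integers: the first two pieces sum to an integer by construction of the $a_\ell$, while $\sum_m\omega_j(q_m^0)\in(0,k)$ generically is not one. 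Hence $\rho\notin\Z^k$, the factor $g_\rho$ is not even defined, the base function of your perturbation family is not inner, and there are no ``integer base values'' at which the IFT could pin the sums; nor can a small perturbation repair this, since the sums vary continuously and must jump to an integer. The fix is to reverse the order of operations, which is precisely what the paper does: first choose $k$ interior points (distinct from everything) at which the derivative of $(q_1,\dots,q_k)\mapsto\bigl(\sum_m\omega_j(q_m)\bigr)_j$ has rank $k$, and \emph{then} apply Lemma~\ref{lem7} to the enlarged set of $r{+}k$ points to obtain the fixed auxiliary points restoring integrality of the whole configuration. With that reordering your two-step argument goes through, and your remaining worry resolves cleanly: the correction $q(p)$ cannot spoil surjectivity in $p$ because $\partial h(\zeta_j)/\partial q_m=0$ at the base point (every such term still contains the vanishing factor $\varphi_{p_j}(\zeta_j)$); the paper reaches the same conclusion in one stroke by assembling interpolation and integrality constraints into a single map from $\R^{2r+2k}$ to $\R^{2r+k}$ whose Jacobian is block-triangular with an invertible $2r\times2r$ diagonal block and a rank-$k$ lower block.
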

\begin{proof}
Note that $\mathcal E_{r+k}\subset \EN$. Since the set of parameters $\lambda ,p_1,\dots,p_{N},\rho _1,\dots,\rho _N $ which define a function $h\in \mathcal E_N$ is compact, it suffices to show that there exists some $h\in \mathcal E_{r+k}$ which realizes the constraints $h(\zeta _l)=c\,\tau _l, \ l=1,\dots,r$. 
This is the case for $c=0$ since, from Corollary\,\ref{cor8}, there exist $\zeta _{r+1},\dots,\zeta _{r+k'}\in\Omega$ such that the inner function of order 
$\leqslant N$
\[
h(z)=g_\rho \varphi_{\zeta _1}\cdots\varphi_{\zeta _{r+k'}}, \textrm{with }\rho _j=-\sum_{\ell=1}^{r+k'}\omega_j(\zeta_\ell)\in\Z,\quad  j=1,\dots,k,
\]
vanishes at the points $\zeta _1,\dots,\zeta _r$. Thus the maximum is realized and $c_N\geqslant 0$.
\medskip

Now, we consider the mapping $A$ from $\R^{2k}=\C^k$ with values in $\R^k$ defined by
\[
A_j(p_{r+1},\dots,p_{r+k})=\sum_{\ell=r+1}^{r+k}\omega_j(p_l),\quad j=1,\dots,k.
\]
We assume for the time being that we can find $k{+}k'$ points $\zeta_{r+1},\dots,\zeta_{r+k+k'}\in\Omega$ such that:
\[
\sum_{\ell=1}^{r+k+k'}\omega_j(\zeta_\ell)\in\N,\quad j=1,\dots,k,
\]
the range of the derivative of $A$ at the point $(\zeta_{r+1},\dots,\zeta_{r+k})$ is equal to $k$ and
the points $\zeta _j$, $1\leqslant j\leqslant r{+}k{+}k'$ are all distinct. 

We keep the points $\zeta _{r+k+1},\dots,\zeta _{r+k+k'}$ fixed 
and $\rho _j=- \sum_{\ell=1}^{r+k+k'}\omega_j(\zeta_\ell)$. We set
\begin{align*}
\Phi _{j}(c;p_1,\dots,p_{r+k})&=\varphi_{p_1}(\zeta_j)\cdots\varphi_{p_{r+k}}(\zeta _j)
\varphi_{\zeta_{r+k+1}}(\zeta_j)\cdots\varphi _{\zeta _{r+k+k'}}(\zeta _j)-c\,\tau_j/g_\rho (\zeta _j),\\
&\phantom{5}\hskip5cm 1\leqslant j\leqslant r,\\
\Phi _{j+r}(c;p_1,\dots,p_{r+k})&=\sum_{\ell=1}^{r+k}\omega_j(p_\ell)+\sum_{\ell=r+k+1}^{r+k+k'}\omega_j(\zeta_\ell)+\rho_j,\quad j=1,\dots,k.
\end{align*}
Clearly, it holds $\Phi(0;\zeta _1,\dots,\zeta _{r+k})=0$. Now we look at the dependence of the first $r$ components 
of $\Phi (\cdot)$ w.r.t.  the first $r$ variables (regarded as mapping from $\R^{2r}$ into $\R^{2r}$). The partial derivatives exist and satisfy 
\begin{align*}
D_{ p_\ell}\Phi _j(0;\zeta _1\dots,\zeta _{r+k})&=0,\quad\textrm{if } \ell\neq j,\ 1\leqslant j\leqslant r,\\
D_{ p_j}\Phi _j(0;\zeta _1\dots,\zeta _{r+k})&\textrm{ is invertible},\quad\textrm{ if  } 1\leqslant j\leqslant r.
\end{align*}
Indeed, $\zeta _j$ is a simple root of $\varphi _{\zeta _j}(z)$ and $\varphi _{\zeta _j}(\zeta _\ell)\neq 0$ if $\ell\neq j$.
The derivative of the last $k$ components of $\Phi(0;\zeta _1\dots,\zeta _{r+k})$ with respect to the last $k$
variables  (considered as acting from $\R^{2k}$ into $\R^k$) has rank $k$. Therefore, 
for $c=0$, the derivative of $\Phi $ from $\C^{r+k}=\R^{2r+2k}$ into $\R^{2r+k}$ has rank $2r{+}k$ at the point $(\zeta _1\dots,\zeta _{r+k})$; then, we deduce from the implicit function theorem the existence, for small $c$, of a solution to the equations $\Phi(c;p_1,\dots,p_{r+k})=0$.
For this solution, the function
\[
h(z)=g_\rho(z) \varphi_{p_1}(z)\cdots\varphi_{p_{r+k}}(z)
\varphi_{\zeta_{r+k+1}}(z)\cdots\varphi _{\zeta _{r+k+k'}}(z)
\]
satisfies $h(\zeta _j)=c\tau _j$ and belongs to $\mathcal E_{r+2k}$, whence $c_{r+2k}>0$.
 \medskip

Now, we turn to the existence of $k{+}k'$ points $\zeta_{r+1},\dots,\zeta_{r+k+k'}\in\Omega$ such that : 
\[
\sum_{\ell=1}^{r+k+k'}\omega_j(\zeta_\ell)\in\N,\quad j=1,\dots,k,
\]
the range of the derivative of $A$ at the point $(\zeta_{r+1},\dots,\zeta_{r+k})$ is equal to $k$ and
the points $\zeta _j$, $1\leqslant j\leqslant r{+}k{+}k'$ are all distinct. 

For that, we first choose $k$ distinct points $\zeta_{r+1},\dots,\zeta_{r+k}\in\Omega$, also distinct from $\zeta_1,\dots,\zeta_r$, such that the rank of the
derivative of $A$ at the point $(\zeta_{r+1},\dots,\zeta_{r+k})$ be of rank $k$, (it is easy to find such points). From Lemma\,\ref{lem7}, there exist points $\zeta_{r+k+1},\dots,\zeta_{r+k+k'}\in\Omega$ such that $\rho _j=-\sum_{\ell=1}^{r+k+k'}\omega_j(\zeta_\ell)\in\Z,\  j=1,\dots,k$,
and such that the points $\zeta_1,\dots,\zeta_{r+k+k'}$ are all distinct. 
\end{proof}

We consider a solution $h_N\in\mathcal E_N$ of $(P_N)$ that we write as
\begin{align}\label{gstar}
h_N(\zeta )=e^{i\theta_N}g_{\rho^*}(\zeta ) \varphi_{p _1^*}(\zeta )\dots\varphi_{p_N^*}(\zeta),\quad\textrm{with }\sum_{\ell=1}^N\omega_j(p_\ell^*)=-\rho _j^*,\  j=1,\dots,k,
\end{align}
 with $p_j^*\in \Omega$ if $1\leqslant j\leqslant n$ and $p_j^*\in\partial \Omega$ if $j>n$, $\rho _j^*\in \Z$.
If some $\tau _j$ are zero, we assume that the numbering is chosen such that $\tau _j\neq 0$ if $1\leqslant j\leqslant r_1$ and $\tau _j=0$ if $r_1<j\leqslant r$, otherwise $r_1=r$. We also assume that the representation
 \eqref{gstar} is such that $p^*_{n-j}=\zeta_{r-j}$ if $0\leqslant j<r{-}r_1$. The logarithm function being increasing, we can rewrite problem $(P_N)$ in the equivalent form 
\begin{align*}
\left\{\begin{matrix}
\textrm{find }h_N \in \EN\textrm{ which realizes} ,\hfill\\[10pt]
\gamma_N=\displaystyle\max_{\gamma \in\R,\,h\in\mathcal E_N}\{\gamma \,: \ \log(| h(\zeta _j)/ \tau _j|){-}\gamma =0,\ \arg( h(\zeta _j)/\tau _j)=0 ,\   1\leqslant j\leqslant r_1,\\
\hfill h(\zeta _j)=0,\ r_1<j\leqslant r \}.
\end{matrix}
\right.
\end{align*}
Now, we introduce $\e=(e_1,\dots,e_{n_1})\in\C^{n_1}$, $n_1=n{+}r_1{-}r$, $\tv=(t_{n+1},\cdots,t_N)$  and we set
\begin{align*}
d_\ell(\gamma,\theta,\e,\tv)&:=\log(| h(\zeta_\ell)/\tau_\ell|){-\gamma },\\
d_{\ell+r_1}(\gamma,\theta,\e,\tv)&:=\arg( h(\zeta _\ell)/\tau _\ell),\quad \ell=1,\dots,r_1,\\
d_{2r_1+j}(\gamma,\theta, \e,\tv)&:={\scriptstyle\sum_{\ell=1}^N}\omega_j(p_\ell)+\rho _j^*,\  j=1,\dots,k,
\end{align*}
where
\begin{align*}
h(\zeta )&=e^{i\theta }g_{\rho^*}(\zeta )\, \varphi_{p _1}(\zeta )\dots\varphi_{p_N}(\zeta),\\
p_j&=p_j^*{+}e_j \ \textrm { if }\ 1\leqslant j\leqslant n_1,\  p_j=p_j^* \ \textrm { if }\ n_1< j\leqslant n, \quad p_j=p_j^*{-}t_j\nu_j^* \ \textrm { if } j>n.
\end{align*}
Here, $\nu_j^*$ denotes the unit outward normal vector at the boundary point $p_j^*\in\partial \Omega$.
There exists $\sigma >0$ such that, if $|e_j|<\sigma $ and if $0\leqslant t_j<\sigma $ for all $j$, then $p_j\in \overline{\Omega}$. If furthermore $d_{2r_1+j}(\gamma,\theta, \e,\tv)=0$, then $h\in \EN$.
Therefore $(\gamma _N,\theta^*,\mathbf 0,\mathbf 0)$ realizes
\begin{align*}
\gamma _N=\max\{\gamma\,:\  d_m(\gamma,&\theta,\e,\tv)=0,\ 1\leqslant m\leqslant 2r_1{+}k,\\
&\gamma,\theta  \in\R,\quad |e_j|<\sigma \ \textrm{for }1\leqslant j\leqslant n_1, 0\leqslant t_j<\sigma \textrm{ if } j>n\}.
\end{align*}
Note that the functions $\log(\varphi _p(\zeta ))$ are smooth if $p\neq \zeta $ and that $p_j^*\neq \zeta _l$  for $j\leqslant n_1$, $j\geqslant n$ and all $l\leqslant r_1$.
So, the constraints $d_m(\gamma,\theta,\e,\tv)$ are smooth w.r.t. $e_j$ and $t_j$ for sufficiently small $\sigma $. We deduce from the necessary Fritz John optimality conditions\,\cite{mafr} : there exist $2r_1{+}k{+}1$ real numbers $\ell_0,\dots,\ell_{2r_1+k}$ and $N{-}n$ positive or null numbers $x_{n+1},\dots,x_N$, not all zero, such that
\begin{align*} 
\ell_{0}\nabla \gamma +\sum_{m=1}^{2r_1+k}\ell_m\nabla d_m(\gamma _N,\theta_N,\mathbf 0,\mathbf 0)
+\sum_{j=n+1}^Nx_j\nabla t_j=0.
\end{align*}
We now write $e_j=x_j{+}i\, y_j$, with $x_j$ and $y_j\in\R$ and $\nabla_j=(\partial _{x_j};\partial _{y_j})$.
We note that the $2{+}2n_1{+}N{-}n$ components of the gradient vector $\nabla$ have the following order: first the partial derivatives $\partial_ \gamma $, $\partial _\theta$, then the components $\nabla_j$, $j=1,\dots,n_1$, and finally the $\partial _{t_j}$, $n{+}1\leqslant j\leqslant N$. Note that $\partial _\theta d_{j+r_1}(\gamma ,\theta ,\e,\tv)=1$, if $1\leqslant j\leqslant r_1$, $\partial _\theta d_m(\gamma ,\theta ,\e,\tv)=0$ otherwise. This allows to rewrite the previous optimality condition as
\begin{align}
\ell_0-\ell_1-\cdots-\ell_{r_1}&=0, \quad \ell_{r_1+1}+\cdots+\ell_{2r_1}=0,  \nonumber\\ 
\sum_{m=1}^{2r_1+k}\ell_m\nabla_j d_m(\gamma _N,\theta_N,\mathbf 0,\mathbf 0)&=0, \quad j=1,\dots,n_1, \label{la}\\
\sum_{m=1}^{2r_1+k}\ell_m\partial_{t_j}  d_m(\gamma _N,\theta_N,\mathbf 0,\mathbf 0)&=-x_j, \quad j=n{+}1,\dots,N.
\label{joh}
\end{align}
Notice that this implies that $\ell_1,\dots,\ell_{2k+r_1}$ are not all zero. Simple calculations give 
\begin{align}
\nabla_{\!j \,}d_m(\gamma _N,\theta_N,\mathbf 0,\mathbf 0)&=\nabla_{\!j \,}\log|\varphi _{p_j^*}(\zeta _m)|\nonumber\\
\nabla_{\!j \,}d_{r_1+m}(\gamma _N,\theta_N,\mathbf 0,\mathbf 0)&=\nabla_{\!j \,}\arg \varphi_{p_j^*}(\zeta_m),\ m=1,\dots,r_1,\label{graj}\\
\nabla _{\!j \,}d_{2r_1+m} (\gamma _N,\theta_N,\mathbf 0,\mathbf 0)&=\nabla_{\!j \,}\omega_m (p_j^*),\ m=1,\dots,k.\nonumber
\end{align}
We now introduce the real multi-valued harmonic function
\begin{align} \label{H}
H(z)=\sum_{m=1}^{r_1}\big(\ell_m\,\log|\varphi _z(\zeta _m)|+\ell_{m+r_1}\arg\varphi_z(\zeta _m)\big)+\sum_{m=1}^k\ell_{2r_1+m}\omega_m(z),
\end{align}
and we set, with $z=x{+}iy$,
\begin{align} \label{a}
a(z)=\partial_x H(z)-i\,\partial_y H(z). 
\end{align}
With this notation, condition \eqref{la} becomes $a(p_j^*)=0$, $j=1,\dots,n_1$.\medskip

\noindent{\bf Remark.} {\it The function $a$ is not identically equal to zero in $\Omega$. Indeed, otherwise $H$ would be constant, 
which implies  $ \ell_m=\ell_{m+r_1}=0$ to eliminate the singularity at $\zeta _m$, $m=1,\dots,r$, and then $\ell_{2r_1+m}=0$ 
since the $\omega_m$ are independent. This contradicts  that these multipliers are not all zero. Note also that $a$ is analytic in a 
neighborhood of the boundary; therefore, $a$ has only a finite number of zeros on the boundary.}\medskip

The function $H(\cdot)$ is harmonic in a neighborhood of $\Omega$, except at the points $\{\zeta _1,\dots,\zeta _k\}$, whence the function $a$ is meromorphic in $\overline\Omega$ with roots $p_j^*$, $j=1,\dots,n_1$, and possible poles $\zeta _1,\dots, \zeta _{r_1}$. These poles are simple, since $\zeta _j$ is a simple root of $\varphi _{z}(\zeta _j)$. Note that $H(z)$ is constant on each component of $\partial \Omega$; cf.\ \eqref{eq2}. 
By differentiation of $H$ with respect to the arclength $\sigma $ at the point $z(\sigma )=x(\sigma ){+ }i\,y(\sigma )$ we get
\[
0=\frac{d}{d\sigma }H(z(\sigma ))=\frac{\partial H}{\partial x}\frac{dx}{d\sigma }+\frac{\partial H}{\partial y}\frac{dy}{d\sigma }=\Re \big(a(z(\sigma ))z'(\sigma )\big).
\]
We still  differentiate  this relation with respect to $\sigma $ and divide the result by the imaginary number $a(z)z'$;  this gives
\[
\Im \Big(\frac{a'(z(\sigma ))z'(\sigma )}{a(z(\sigma ))}+\frac{z''(\sigma )}{z'(\sigma )}\Big)=0,\quad\textrm{whence  }\ \frac{1}{2\pi i}\int_{\partial \Omega}\frac{a'(z(\sigma ))z'(\sigma )}{a(z(\sigma ))}d\sigma =k{-}1.
\]
From the argument principle, the number of zeros of $a$ in $\Omega$ plus half the number of zeros on the boundary minus the number of poles is equal to $k{-}1$; thus $a$ has at most $r_1{+}k{-}1$ zeros in $\Omega$;  consequently,  $n_1\leqslant r_1{+}k{-}1$,  
whence $n\leqslant r{+}k{-}1$.\medskip

We now turn to condition \eqref{joh} associated with the inequality constraints $t_j\geqslant 0$ 
at the points $p_j^*=z(\sigma _j)$, $j>n$. 
It can be written as $\Re(a(z(\sigma _j))\nu_j^* )=x_j$\,; but  $z'(\sigma _j)=i\nu_j^*$ and we have seen 
that $\Re(a(z(\sigma _j))z'(\sigma _j)) )=0$.
Thus, condition \eqref{joh} becomes $i\,a(z(\sigma _j)\,z'(\sigma _j)=-x_j$ and implies
\begin{equation}\label{bord}
i\,z'(\sigma _j)a(z(\sigma _j))\le 0\quad\textrm{ if } z(\sigma _j)=p_j^*, n<j\leqslant N.
\end{equation}\smallskip

At this stage, we do not know if our solution of Problem $(P_N)$ depends or not
on $N$, but it is clear that, if $h_N$ is a solution of $(P_N)$, then $h_N\in \mathcal E_{r+k-1}$ and $h_N$ is solution of $(P_{N'})$ for all $r{+}k{-}1\leqslant N'\leqslant N$\,; hence, $c_N=c_{r+k-1}$ and $\gamma _N=\gamma _{r+k-1}$ are independent of $N$. From now on, we set $n=r{+}k{-}1$, $c^*=c_n$, $\gamma ^*=\gamma_n$ and we consider $h_N=h_n$ a solution of $ P_n$ (which is also a solution of $(P_N)$ for all $N\geqslant n$).
But, even if $h_N$ is the same for all $N\geqslant n$,  its representation
\eqref{gstar} depends on the choice of $p_{n+1}^*,\dots,p_N^*$ (which can be made arbitrary on the boundary) and, a priori,
the multipliers $\ell_m$ (and then the functions $H$ and $a$) depend, not only of $N$, but also of this choice.
\medskip

To the choice of  $h_N$ with roots $p_1^*,{\dots},p_n^*$ we add a sequence of points $p_{n+1}^*,{\dots},p_{n+l}^*,{\dots}$ that we assume dense in $\partial \Omega$.
For each value of $N\geqslant n$, we write $h_N$ as
\begin{align*}
h_N(\zeta )=e^{i\theta_{N}}g_{\rho_{N}}(\zeta ) \varphi_{p _1^*}(\zeta )\dots\varphi_{p_N^*}(\zeta),\quad\textrm {with }\sum_{\ell=1}^N\omega_j(p_\ell^*)=-\rho _j^*,\  j=1,\dots,k,
\end{align*}
and obtain as previously multipliers $\ell_{1,N},\dots,\ell_{2r_1+k,N}$,  $x_{n+1,N},\dots,x_{N,N}$, not all zero, such that
\begin{align*}
\sum_{m=1}^{2r_1+k}\ell_{m,N}\nabla_j d_m&=0, \quad j=1,\dots,n_1,\\
\sum_{m=1}^{2r_1+k}\ell_{m,N}\,\partial_{t_j}  d_m&=-x_{j,N}\leqslant0, \quad j=n{+}1,\dots,N.
\end{align*}
(We have written $\nabla_jd_m$ in place of $\nabla_jd_m(\gamma^*,\theta_N,\mathbf 0,\mathbf 0)$ since, cf. \eqref{graj}, this term does not depend on $\gamma^*$, on $\theta_N$, or on $N$).
Multiplying, if needed, each multiplier by a positive constant, we may assume that $\max\{|\ell_{1,N}|,\dots,|\ell_{2r_1+k,N}|\}=1$. 
Then, there is a subsequence
$\ell_{1,N_m},\dots,\ell_{2r_1+k,N_m}$ which tends to $\ell_1^*,\dots,\ell_{2r_1+k}^*$ as $m\to\infty$ and we get
\begin{align*}
\sum_{m=1}^{2r_1+k}\ell_m^*\nabla_j d_m=0, \quad  \textrm{if  }1\leqslant j\leqslant n_1,\qquad
\sum_{m=1}^{2r_1+k}\ell_m^*\,\partial_{t_j}  d_m\leqslant 0, \quad \textrm{if  }j>n.
\end{align*}
Now we set
\begin{align*} 
H_N(z)=\sum_{m=1}^{r_1}\big(\ell_m^*\,\log|\varphi _z(\zeta _m)|+\ell_{m+r_1}^*\arg\varphi_z(\zeta _m)\big)+\sum_{m=1}^k\ell_{2r_1+m}^*\omega_m(z),
\end{align*}
 and $a^*(z)=\partial_x H_N(z)-i\,\partial_y H_N(z)$. 

As previously $a^*$ is meromorphic in $\overline\Omega$, not identically zero, $a^*(p_j^*)=0$ for $j=1,\dots, n_1$, the possible poles are $\{\zeta _1,\cdots,\zeta _k\}$ and are simple. Furthermore, at the boundary points $p_j^*=z(\sigma _j)$, $j>n$  it holds
$i\,z'(\sigma _j)a^*(z(\sigma _j))\le 0$\,; since these points are dense in $\partial \Omega$ we obtain 
$i\,z'(\sigma )a^*(z(\sigma ))\leqslant0$ on $\partial \Omega$.\medskip

I think that the multiplicity of $p_j^*$, $1\leqslant j\leqslant n_1$ as root of $h_N$ is less or equal than its multiplicity as root of $a^*$. Clearly, this is the case if the $p_j^*$ are all distinct. We now look at the case of multiple roots.
Without loss of generality, we assume that in the numbering $1\leqslant j\leqslant n_1$,  equal $p_j^*$s are successive. For a root $p_{j+1}^*=\cdots=p_{j+\nu_j}^*$ with multiplicity $\nu_j$, we define $u_{j+m}=e_{j+1}^m+\cdots+e_{j+\nu_j}^m$, for $m=1,\dots,\nu_j$\,; note that, if $m>\nu_j$, then $e_{j+1}^m+\cdots+e_{j+\nu_j}^m$ are polynomial functions of $u_{j+1},\dots, u_{j+\nu_j}$.  In this way, we get a mapping between $\e$ and $\uu=(u_1,\dots,u_{n_1})$ which is one to one up to a renumbering of the equal $e_j$. It holds
\begin{align*}
d_m(\gamma,\theta,\e,\tv)-d_m(\gamma,\theta,\mathbf 0,\tv)&{=}\sum_{j=1}^{n_1}(\log|\varphi _{p_j^*+e_j}(\zeta _m)|-\log|\varphi _{p_j^*}(\zeta _m|), \\
d_{m+r_1}(\gamma,\theta,\e,\tv){-}d_{m+r_1}(\gamma,\theta,\mathbf 0,\tv)&{=}\sum_{j=1}^{n_1}(\arg(\varphi _{p_j^*+e_j}(\zeta _m)){-}\arg(\varphi _{p_j^*}(\zeta _m )), 1\leqslant m\leqslant r_1,\\
\hskip-1.5cm
d_{m+2r_1}(\gamma,\theta,\e,\tv){-}d_{m+2r_1}(\gamma,\theta,\mathbf 0,\tv)&{=}\sum_{j=1}^{n_1}(\omega_m(p_j^*{+}e_j){-}\omega_m(p_j^*)), \ 1\leqslant m\leqslant k.
\end{align*}
These functions $d_\ell(\gamma,\theta,\e,\tv)$ are the real parts of holomorphic functions w.r.t. $\e_j$ so, 
we can write the expansions  (with $\nu$ integer)
\begin{align}\label{dev}
d_m(\gamma,\theta,\e,\tv)=d_m(\gamma,\theta,\mathbf 0,\tv)+\Re\Big(\sum_{j=1}^{n_1}\sum_{\nu\geqslant1}d_{m,\nu}(p_j^*)e_j^\nu\Big),
\end{align}
which are convergent if $|e|\leqslant\sigma $ for some $\sigma >0$. This allows to define
\[
\delta _\ell(\gamma,\theta,\uu,\tv):=d_\ell(\gamma,\theta,\e,\tv),
\]
and this function $\delta _\ell(\gamma,\theta,\uu,\tv)$ is the real part of a holomorphic function in each $u_j$.
With this change of variables, $(\gamma^*,\theta^*,\mathbf 0,\mathbf 0)$ realizes 
\begin{align*}
\gamma^*=\max\{\gamma\,:\  \delta _\ell(\gamma,&\theta,\uu,\tv)=0,\ 1\leqslant\ell\leqslant2r_1{+}k,\\
&\gamma,\theta  \in\R,\quad |\uu_j|<\sigma' \ \textrm{pour }1\leqslant j\leqslant n_1, 0\leqslant t_j<\sigma' \textrm{ if } j>n\}.
\end{align*}
As before, we write the Fritz John necessary optimality conditions : there exists $2r_1{+}k{+}1$ real numbers $\ell_0^*,\dots,\ell_{2r_1+k}^*$ 
and positive or null numbers $x_{n+1}^*,\dots$, not all zero, such that 
\begin{align*} 
\ell_{0}^*\nabla \gamma +\sum_{m=1}^{2r_1+k}\ell_m^*\nabla \delta_m(\gamma^*,\theta^*,\mathbf 0,\mathbf 0)
+\sum_{j\geqslant n+1}x_j^*\nabla t_j=0.
\end{align*}
The change with respect to our previous calculations is that, from now on, we use the notation $\nabla_j=(\partial _{x_j};\partial _{y_j})$ for the partial derivatives with respect to $u_j=x_j{+}i y_j$.
So, the optimality conditions rewrite
\begin{align}
\ell_0^*-\ell_1^*-\cdots-\ell_{r_1}^*&=0, \quad \ell_{r_1+1}^*+\cdots+\ell_{2r_1}^*=0,  \nonumber\\ 
\sum_{m=1}^{2r_1+k}\ell_m^*\nabla_j \delta _m(\gamma^*,\theta^*,\mathbf 0,\mathbf 0)&=0, \quad j=1,\dots,n_1, \label{lab}\\
\sum_{m=1}^{2r_1+k}\ell_m^*\partial_{t_j}  \delta _m(\gamma^*,\theta^*\mathbf 0,\mathbf 0)&=-x_j^*, \quad j\geqslant n{+}1.
\label{john}
\end{align}
As before we introduce
\begin{align*} 
h_N(z)=\sum_{m=1}^{r_1}\big(\ell_m^*\,\log|\varphi _z(\zeta _m)|+\ell_{m+r_1}^*\arg\varphi_z(\zeta _m)\big)+\sum_{m=1}^k\ell_{2r_1+m}^*\omega_m(z),
\end{align*}
and $a^*(z)=\partial_x h_N(z)-i\,\partial_y h_N(z)$. 
We deduce from \eqref{john} the property $i\,z'(\sigma )a^*(z(\sigma ))\leqslant0$ on the boundary $\partial \Omega$.

Notice that an expansion analogous to \eqref{dev} exists in each point $z=p_j^*$
\[
h_N(z{+}e)=h_N(z)+\sum_{m=1}^{2r_1+k}\sum_{\nu\geqslant 1}\ell^*_m\Re(d_{m,\nu}(z)e^\nu),
\]
whence
\[
a^*(z{+}e)=\sum_{\nu\geqslant1}\nu\,\ell^*_m\sum_{m=1}^{2r_1+k}(d_{m,\nu}(z)e^{\nu-1}).
\]
Now, if $p_{j+1}^*=\cdots=p_{j+\nu_j}^*$ is a root of $h_N$ with multiplicity $\nu_j$,
condition \eqref{lab} corresponding to the index $j{+}\nu$ with $1\leqslant\nu\leqslant\nu_j$ writes $\sum_{m=1}^{2r_1+k}\ell_m^*d_{m,\nu}(p_{j+1}^*)=0$\,; this implies $a^*(p_{j+1}^*{+}e)=O(e^\nu_j)$, that is to say: $p_{j+1}^*$ is a root of $a^*$ of multiplicity $\nu_j$.

 \section{Interpolation with minimal maximum norm}\label{picnev}

\begin{theorem}\label{th5}
 There exists a unique holomorphic function $f^*\in H^\infty(\Omega)$ realizing
  \[
m^*=\min\{\|f\|_\infty\,: f\ \in H^\infty(\Omega),\  f(\zeta _1)=\tau _1,\dots,f(\zeta _k)=\tau _k\}.
\]
Furthermore, $h_N=f^*/m^*$ is an inner function of order $n\leqslant r{+}k{-}1$ and it is the unique solution 
of Problem $(P_N)$ for all $N\geqslant n$. 
\end{theorem}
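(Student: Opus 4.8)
The plan is to identify the minimizer of $(P)$ with $f^*=m^*h_n$, where $h_n$ is the solution of $(P_n)$ with $n=r{+}k{-}1$ constructed above, and to prove $m^*=1/c^*$. One inequality is immediate: since $h_n$ is inner of order $\leqslant n$ with $h_n(\zeta_j)=c^*\tau_j$, the function $h_n/c^*$ is admissible for $(P)$ and satisfies $\|h_n/c^*\|_\infty=1/c^*$, so $m^*\leqslant 1/c^*$. The whole difficulty is the reverse bound $\|f\|_\infty\geqslant 1/c^*$ for every admissible $f$, and this is where the dual function $a^*$ built above from the limit multipliers $\ell_m^*$ enters. Recall its established properties: it is meromorphic in $\overline\Omega$, not identically zero, with simple poles only at $\zeta_1,\dots,\zeta_{r_1}$; on $\partial\Omega$ one has $\Re(a^*z')=0$ (since the associated harmonic function is constant on each boundary component), so writing $a^*z'=i\beta$ with $\beta$ real, the inequality $i\,z'a^*\leqslant0$ gives $\beta\geqslant0$, and $\beta\not\equiv0$ because $a^*$ is analytic and nonzero near the boundary.

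The key computation pairs an arbitrary admissible $f$ with $a^*/h_n$ through $\int_{\partial\Omega}\frac{f(z)a^*(z)}{h_n(z)}\,dz$. At each free root $p_\ell^*$ of $h_n$ the zero of $h_n$ is cancelled by a zero of $a^*$ of at least equal order (this is exactly the multiplicity statement proved just before the theorem), and at the forced zeros $\zeta_m$ (those with $\tau_m=0$, which are simple in the representation used above) the simple zero of $h_n$ is cancelled by the zero of $f$; hence the integrand is holomorphic at every zero of $h_n$. The only interior poles are the simple poles $\zeta_1,\dots,\zeta_{r_1}$, where $h_n(\zeta_m)=c^*\tau_m\neq0$ and the residue equals $f(\zeta_m)r_m/(c^*\tau_m)=r_m/c^*$ with $r_m=\operatorname{Res}_{\zeta_m}a^*$, a quantity independent of $f$. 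By the residue theorem $\int_{\partial\Omega}\frac{fa^*}{h_n}\,dz=\frac{2\pi i}{c^*}\sum_m r_m$, while the same computation applied to $a^*$ alone gives $\int_{\partial\Omega}\beta\,d\sigma=2\pi\sum_m r_m>0$. On $\partial\Omega$, using $a^*z'=i\beta$ and $1/h_n=\overline{h_n}$ (as $|h_n|=1$ there), the form $\frac{fa^*}{h_n}\,dz$ restricts to $i\,f\,\overline{h_n}\,\beta\,d\sigma$, so $\int_{\partial\Omega}f\,\overline{h_n}\,\beta\,d\sigma=\frac1{c^*}\int_{\partial\Omega}\beta\,d\sigma$. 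Taking moduli with $\beta\geqslant0$ and $|\overline{h_n}|=1$ yields $\frac1{c^*}\int\beta\,d\sigma=\bigl|\int f\,\overline{h_n}\,\beta\,d\sigma\bigr|\leqslant\|f\|_\infty\int\beta\,d\sigma$, whence $\|f\|_\infty\geqslant 1/c^*$. Thus $m^*=1/c^*$, the function $f^*=m^*h_n=h_n/c^*$ realizes the minimum, and $f^*/m^*=h_n$ is inner of order $n\leqslant r{+}k{-}1$.

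For uniqueness I would run the equality case of the same estimate. For any minimizer $f$ the identity $\int f\,\overline{h_n}\,\beta\,d\sigma=\frac1{c^*}\int\beta\,d\sigma=m^*\int\beta\,d\sigma$ still holds, forcing equality in $\bigl|\int f\overline{h_n}\beta\bigr|\leqslant\int|f|\beta\leqslant m^*\int\beta$; hence $|f|=m^*$ and $f\,\overline{h_n}\geqslant0$ on $\{\beta>0\}$, so $f=m^*h_n$ there. Since $\{\beta>0\}$ is $\partial\Omega$ minus the finitely many boundary zeros of $a^*$, the boundary values of $f$ and $m^*h_n$ agree a.e., and two functions of $H^\infty(\Omega)$ with equal a.e.\ boundary values coincide; thus $f=m^*h_n$, giving uniqueness of $f^*$. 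Finally, if $\tilde h$ solves $(P_N)$ for some $N\geqslant n$, then $c_N=c^*$ and $\tilde h/c^*$ is a minimizer of $(P)$, so $\tilde h/c^*=f^*=h_n/c^*$ and $\tilde h=h_n$; this yields uniqueness of the solution of $(P_N)$.

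The main obstacle is the duality identity of the second paragraph: everything hinges on $fa^*/h_n$ being holomorphic at the zeros of $h_n$, which is precisely why the multiplicity of each interior root of $h_n$ was shown not to exceed its multiplicity as a root of $a^*$, and why the forced zeros must be simple. A secondary point to verify carefully is the boundary-value identification in the uniqueness step, namely that an a.e.\ boundary equality of bounded holomorphic functions on this finitely connected analytic domain propagates into $\Omega$; this is standard $H^\infty$ boundary-value theory and should be cited rather than reproved.
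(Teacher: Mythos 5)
Your proposal is correct and is essentially the paper's own argument: what you obtain from the residue theorem applied to $f a^*/h_n$ (the value $\tfrac{2\pi i}{c^*}\sum_m r_m$, independent of the admissible $f$) is exactly what the paper gets by observing that $\varphi=i\,a^*(1-c^*f/h_N)$ is holomorphic in $\Omega$, hence $\int_{\partial\Omega}\varphi\,dz=0$, and your $\beta\,d\sigma$ is precisely the paper's $|a^*(z)|\,|dz|$. All the key ingredients coincide: multiplicity domination of $a^*$ at the free roots, simplicity of the forced zeros and of the poles $\zeta_m$, the boundary inequality $i\,z'a^*\leqslant 0$ giving the $L^1$ estimate, and the a.e.\ equality case on $\partial\Omega$ for uniqueness.
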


\begin{proof}
 Let $h_N$ be a common solution of Problems $(P_N)$, $N\geqslant r{+}k{-}1$,  obtained in the previous section 
 and $a^*$ the corresponding meromorphic function. 
 
Let $f$ be a holomorphic function $f$ in $\Omega$ which satisfies $f(\zeta _l)=\tau _l$, $l=1,\dots,k$; 
we set
 \[
\varphi (z):= ia^*(1{-}c^*f/h_N)=i\frac{(h_N-c^*f)a^*}{h_N}.
\]
This function $\varphi $ is holomorphic in $ \Omega$; indeed, the zeros $p_j^*$ of $h_N$, $1\leqslant j\leqslant n_1$,  
do not create poles since their multiplicities are less than or equal to the 
corresponding multiplicity as root of $a^*$, the other possible zeros 
$p_{n-j}^*=\zeta_{r-j} $, $n_1<j\leqslant n$ are simple and are root of $h_N{-}c^*f$; furthermore the other possible poles
 $\zeta _l$ of $a^*$ are simple and roots of $h_N{-}c^*f$.
Using the fact that $i z'(s)a^*(z(s))\leqslant0$ and $|h_N|=1$ on the boundary, we get
 \begin{align*}
\int_{\partial \Omega}|a^*(z)|\,|dz|&=- \int_{\partial \Omega}ia^*(z)\,dz=-i\,c^*\int_{\partial \Omega}\frac{f(z)a^*(z)}{h_N(z)}\,dz-\int_{\partial \Omega}\varphi (z)\,dz\\
&=\int_{\partial \Omega}\frac{c^*f(z)}{h_N(z)}|a^*(z)|\,|dz|+0\leqslant c^*\|f\|_{\infty}\int_{\partial \Omega}|a^*(z)|\,|dz|
 \end{align*}
 This implies $1\leqslant c^*\|f||_\infty$, with equality if $f=h_N/c^*$;  therefore,
 $m^*=\min \|f\|_\infty= 1/c^*$.
 Furthermore, if the function $f$ realizes equality, then
 \begin{align*}
\int_{\partial \Omega}|a^*(z)|\,|dz|=\int_{\partial \Omega}\frac{c^*f(z)}{h_N(z)}|a^*(z)|\,|dz|\quad\textrm{and } \Big|\frac{c^*f}{h_N}\Big|\leqslant1 \ \textrm{ a.e. on }\partial \Omega;
\end{align*}
this implies $\frac{c^*f}{h_N}=1$ a.e. on $\partial \Omega$ since the number of zeros of $a^*$ on the boundary is finite;  
thus, $c^*h_N =f$ in $ \Omega$, that proves uniqueness.
\end{proof}

\section{Interpolation with derivatives}\label{picder}
We now consider Hermite type interpolation. We still have $r$ distinct points $\zeta _j$, $j=1,\dots,r$, but now, to each $j$, we associate $s_j$ data 
$\tau _{j,0},\dots,\tau _{j,s_j-1}\in \C$\,; we call $\sigma_1= s_1{+}\cdots{+}s_k$ the number of these data. We look at the problem
\begin{align*}
(P_H)\quad\left\{\begin{matrix}
\textrm{Find a holomorphic function} f^* \textrm{ which realizes} ,\hfill\\[10pt]
m^*=\min\{\|f\|_\infty\,: f\ \in H^\infty(\Omega),\  f^{(s)}(\zeta _j)=\tau _{j,s},\  1\leqslant j\leqslant r, 0\leqslant s<s_j\}.
\end{matrix}
\right.
\end{align*}
We introduce the Hermite polynomial $\varpi$ of degree $\leqslant\sigma _1{-}1$ defined by
\[
\varpi^{(s)}(\zeta _j)=\tau _{j,s}, \quad 1\leqslant j\leqslant r, 0\leqslant s<s_j.
\]
Then the problem rewrites
 \begin{align*}
(P_H)\quad\left\{\begin{matrix}
\textrm{Find a holomorphic function} f^* \textrm{ which realizes} ,\hfill\\[10pt]
m^*=\min\{\|f\|_\infty\,: f\ \in H^\infty(\Omega),\  (f{-}\varpi)(\zeta)=O((\zeta {-}\zeta _j)^{s_j}),\  1\leqslant j\leqslant r\}.
\end{matrix}
\right.
\end{align*}
Such a function $f^*$ clearly exists\,; as previously, we also associate the problem
\begin{align*}
(P_{HN})\quad\left\{\begin{matrix}
\textrm{Find a function } h_N\in\EN \textrm{ which realizes} ,\hfill\\[10pt]
c_N=\max\{c\,: \exists \,h \in \EN,\  (h{-}c\varpi)(\zeta)=O((\zeta {-}\zeta _j)^{s_j}),\  1\leqslant j\leqslant r\}.
\end{matrix}
\right.
\end{align*}
It is easy to adapt the proof of Lemma\,\ref{lem4} to show that Problem $(P_{HN})$ has a solution as soon as $N\geqslant k{+}s_1{+}\cdots{+}s_r$ and that $c_N>0$ if  $N\geqslant 2k{+}s_1{+}\cdots{+}s_r$. 
We first  assume  that
$\tau_{j,0}\neq 0$ for all $j$; then, we may write Problem $(P_{HN})$ as

Find a function $h_N \in \EN$ which realizes
\begin{align*}
\gamma_N=\displaystyle\max_{\gamma \in\R,\,h\in\mathcal E_N}\{\gamma \,: \ \log( (h/ \varpi)(\zeta)){-}\gamma =O((\zeta {-}\zeta _j)^{s_j}),\   j=1,\dots,r\}.
\end{align*}
Note that the condition $\log( (h/ \varpi)(\zeta)){-}\gamma =O((\zeta {-}\zeta _j)^{s_j})$
is equivalent to
\[
\frac{d^s}{d\zeta ^s}(\log ((h/\varpi)(\zeta )){-}\gamma )\Big|_{\zeta =\zeta _j}=0,\quad  0\leqslant s<s_j.
\]
We choose such a solution $h_N\in\mathcal E_N$ of order $n\leqslant N$ that we write
\begin{align}\label{henne}
h_N(\zeta )=e^{i\theta^*}g_{\rho ^*}\varphi_{p _1^*}(\zeta )\dots\varphi_{p_N^*}(\zeta),\ \textrm{ avec  }\sum_{\ell=1}^N\omega_j(p^*_\ell)=-\rho _j^*, j=1,\dots,k.
\end{align}
$p_j^*\in \Omega$ if $1\leqslant j\leqslant n$, $p_j^*\in\partial \Omega$ if $j>n$. Furthermore, we assume that in the numbering, 
 equal $p_j^*$s are successive, we also suppose that the points $p_j^*\in \partial \Omega$, $j>n$, generate a dense set on the boundary
$\partial \Omega$.
We introduce $\e=(e_1,\dots,e_n)\in\C^n$, $\tv=(t_{n+1},\cdots,t_N)$  and we set
\begin{align*}
d_{\ell,s}(\gamma,\theta,\e,\tv)&:=\Re\frac{d^s}{d\zeta ^s}(\log ((h /\varpi)(\zeta )){-}\gamma )\Big|_{\zeta =\zeta _\ell},\\
d_{\ell+r,s}(\gamma,\theta,\e,\tv)&:=\Im\frac{d^s}{d\zeta ^s}(\log( (h /\varpi)(\zeta ))\Big|_{\zeta =\zeta _\ell},\quad\quad{ \ell=1,\dots,r,\ 0\leqslant s<s_j,}\\
d_{2\sigma _1+j}(\gamma,\theta, \e,\tv)&:=\sum_{\ell=1}^N\omega_j(p^*_\ell)+\rho _j^*, \quad j=1,\dots,k,
\end{align*}
where
\begin{align*}
h(\zeta )&=e^{i\theta }g_{\rho^*}(\zeta )\, \varphi_{p _1}(\zeta )\dots\varphi_{p_N}(\zeta),\\
p_j&=p_j^*{+}e_j \ \textrm { if }\ 1\leqslant j\leqslant n, \quad p_j=p_j^*{-}t_j\nu_j^* \ \textrm { if } j>n.
\end{align*}
There exists $\sigma >0$ such that, if $|e_j|<\sigma $ and if $0\leqslant t_j<\sigma $ it holds $p_j\in \overline{\Omega}$. Then $(\gamma _N,\theta_N,\mathbf 0,\mathbf 0)$ realizes
\begin{align*}
\gamma _N=\max\{\gamma\,:\  &d_{\ell,s}(\gamma,\theta,\e,\tv)=0,\ \textrm { if }\ 1\leqslant\ell\leqslant2r,0\leqslant s<r_j,\\&d_{2\sigma _1+j}(\gamma,\theta,\e,\tv)=0,\ \ \ \textrm { if }
1\leqslant j\leqslant k,\\ & \gamma,\theta  \in\R,\quad |e_j|<\sigma ,\ \textrm { if } 1\leqslant j\leqslant n,\quad 0\leqslant t_j<\sigma, \ \textrm { if } j>n\}.
\end{align*}
From the Fritz John optimality conditions, real numbers 
$\ell_0\ell_{1,0},{\dots}\ell_{2r,s_r-1}\ell_{2\sigma _1+1}{\dots}\ell_{2\sigma _1+k}$ and positive or null numbers $x_{n+1},\dots,x_N$, not all zero, exist such that
\begin{align*} 
\ell_{0}\nabla \gamma +\sum_{m=1}^{r}\sum_{s=0}^{s_j-1}(\ell_{m,s}\nabla d_{m,s}{+}&\ell_{m+r,s}\nabla d_{m+r,s})(\gamma _N,\theta_N,\mathbf 0,\mathbf 0)\\
&+\sum_{j=1}^k\ell_{2\sigma _1+j}\nabla d_{2\sigma _1+j}(\gamma _N,\theta_N,\mathbf 0,\mathbf 0)
+\sum_{j=n+1}^Nx_j\nabla t_j=0.
\end{align*}
This can also be written as
\[
\ell_0-\ell_{1,0}-\cdots-\ell_{r,0}=0, \quad \ell_{r+1,0}+\cdots+\ell_{2r,0}=0,
\]
\begin{align}
\sum_{m=1}^r\sum_{s=0}^{s_j-1}(\ell_{m,s}\nabla_j d_{m,s}&{+}\ell_{m+r,s}\nabla_j d_{m+r,s})(\gamma _N,\theta_N,\mathbf 0,\mathbf 0)\nonumber\\
&+\sum_{j=1}^k\ell_{2\sigma _1+j}\nabla_j d_{2\sigma_1+j}(\gamma _N,\theta_N,\mathbf 0,\mathbf 0)=0, \quad j=1,\dots,n, \label{lag2}
\end{align}
\begin{align}
\sum_{m=1}^r\sum_{s=0}^{s_j-1}(\ell_{m,s}\partial_{t_j}  d_{m,s}{+}\ell_{m+r,s}\partial_{t_j}  d_{m+r,s})(\gamma _N,\theta_N,\mathbf 0,\mathbf 0)&=-x_j, \quad j\geqslant n{+}1.
\label{john2}
\end{align}
Thus, the numbers $\ell_{1,0},\dots,\ell_{2r,s},\ell_{2\sigma _1+1},\dots,\ell_{2\sigma _1+k}$ are not all zero and it holds
\begin{align*}
\nabla_{\!j \,}d_{m,s}(\gamma _N,\theta_N,\mathbf 0,\mathbf 0)&=\nabla_{\!j \,}\Re\frac{d^s}{d\zeta ^s}\log\varphi _{p_j^*}(\zeta)\Big|_{\zeta  =\zeta _m}\nonumber\\
\nabla_{\!j \,}d_{m+r}(\gamma _N,\theta_N,\mathbf 0,\mathbf 0)&=\nabla_{\!j \,}\Im\frac{d^s}{d\zeta ^s}\log\varphi _{p_j^*}(\zeta)\Big|_{\zeta  =\zeta _m},\ m=1,\dots,r,\ 0\leqslant s<s_j,\\
\nabla _{\!j \,}d_{2\sigma _1+m} (\gamma _N,\theta_N,\mathbf 0,\mathbf 0)&=\nabla_{\!j \,}\omega_m(p_j^*),\ \ m=1,\dots,k.\nonumber
\end{align*}
We introduce the function
\begin{align*} 
H(z)=\sum_{m=1}^r\sum_{s=0}^{s_j-1}\big(\ell_{m,s}\,\Re\frac{d^s}{d\zeta ^s}\log\varphi _z(\zeta _m)|+\ell_{m+r,s}\frac{d^s}{d\zeta ^s}\Im\log\varphi_z(\zeta _m)\big)\\
+\sum_{m=1}^k\ell_{2\sigma _1+m}\omega_m(z).
\end{align*}
and we set, with $z=x{+}iy$,
\begin{align} \label{a2}
a(z)=\partial_x H(z)-i\,\partial_y H(z). 
\end{align}
Then, condition \eqref{lag2} reads $a(p_j^*)=0$, $j=1,\dots,n$.\medskip

As in Section\,\ref{pn}, it can been shown that the multipliers $\ell_m$ may be chosen independently 
of $N$, and the function $a$ is meromorphic in $\overline\Omega$ with roots $p_j^*$, $j=1,\dots,n$,
and possible poles $\zeta _j$ with order less than or equal to $s_j$, $1\leqslant j\leqslant r$.
Furthermore, $\Re(a(z(\sigma ))z'(\sigma ))=0$ on the boundary $\partial \Omega$ and it holds $n\leqslant\sigma_1{+}k{-}1$. 
From conditions \eqref{john2} related to the inequality constraints $t_j\geqslant 0$, we deduce
\begin{equation*}
i\,z'(\sigma )a(z(\sigma))\le 0,\quad\textrm{ on the boundary}.
\end{equation*}
In the case where the $p_j^*$ are not all distinct, the change of variables $\e\mapsto \uu$  allows to show that the multiplicity of  
$p_j^*$ as root of $h_N$  is less than or equal at its multiplicity as root of $a$. 

In order to lighten the notation, we have assumed up to now $\tau_{j,0}\neq 0$  for all $j$, but it is not difficult 
to treat the other cases. If $\zeta _j$ is a root of multiplicity $\nu_j$ of polynomial $\varpi$, it suffices in the representation
\eqref{henne} of $h_N$  to force $\nu_j$ $p_m^*$ to be equal to $\zeta _j$, as we have 
proceeded in Section\,\ref{pn}.\medskip

We have obtained the result
\begin{theorem} \label{thderiv}
 There exists a unique holomorphic function $f^*\in H^\infty(\Omega)$ which realizes
 \[
m^*=\min\{\|f\|_\infty\,: f\ \in H^\infty(\Omega),\  f^{(s)}(\zeta _j)=\tau _{j,s},\  1\leqslant j\leqslant k, 0\leqslant s<r_j\}.
\]
Furthermore, $h_N=f^*/m^*$ is an inner function of order $n\leqslant\sigma_1{+}k{-}1$ and it
is for all $N\geqslant n$ the unique solution of Problem $(P_{NH})$. 
\end{theorem}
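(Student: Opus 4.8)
The plan is to mirror the proof of Theorem~\ref{th5}, with the simple values $\tau _l$ replaced by the Hermite data $\tau _{j,s}$ and with the meromorphic function $a^*$ constructed in this section playing the same role as before. Recall that, for every $N\geqslant n=\sigma _1{+}k{-}1$, we have at our disposal a common solution $h_N\in\EN$ of $(P_{HN})$, inner with $|h_N|=1$ on $\partial \Omega$, together with a function $a^*$, meromorphic and not identically zero in $\overline\Omega$, whose zeros include each root $p_j^*$ of $h_N$ with at least the same multiplicity, whose only possible poles are the $\zeta _j$ with order at most $s_j$, and which satisfies $i\,z'(\sigma )a^*(z(\sigma ))\leqslant0$ on the boundary.

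First I would fix any competitor $f\in H^\infty(\Omega)$ satisfying $f^{(s)}(\zeta _j)=\tau _{j,s}$, $0\leqslant s<s_j$, and set
\[
\varphi (z):=i\,a^*(z)\Big(1-\frac{c^*f(z)}{h_N(z)}\Big)=i\,\frac{(h_N(z){-}c^*f(z))\,a^*(z)}{h_N(z)}.
\]
The crucial observation is that $h_N$ and $c^*f$ carry the same Hermite data at each $\zeta _j$: since $h_N$ solves $(P_{HN})$ one has $h_N^{(s)}(\zeta _j)=c^*\varpi^{(s)}(\zeta _j)=c^*\tau _{j,s}=c^*f^{(s)}(\zeta _j)$ for $0\leqslant s<s_j$, so that $h_N{-}c^*f$ vanishes to order at least $s_j$ at $\zeta _j$. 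In the generic case $\tau _{j,0}\neq0$ for all $j$, the point $\zeta _j$ is not a zero of $h_N$, the pole of $a^*$ at $\zeta _j$ has order $\leqslant s_j$, and it is therefore absorbed by the zero of order $\geqslant s_j$ of $h_N{-}c^*f$; the interior zeros $p_j^*$ of $h_N$ are cancelled by the zeros of $a^*$ through the multiplicity domination. Hence $\varphi $ is holomorphic in $\Omega$. The remaining cases, where some $\tau _{j,0}=0$, are handled by the forcing reduction of Section~\ref{pn}: one writes $h_N$ in the form \eqref{henne} with $\nu_j$ of the $p_m^*$ equal to $\zeta _j$, so that the forced zero of $h_N$ at $\zeta _j$ is matched by the vanishing of $h_N{-}c^*f$ and by the zeros that $a^*$ then acquires at $\zeta _j$.

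Next I would run the boundary integral exactly as in Theorem~\ref{th5}. On $\partial \Omega$ we have $|h_N|=1$ and $i\,z'(\sigma )a^*(z(\sigma ))\leqslant0$, so that $-i\,a^*\,dz=|a^*|\,|dz|$; combined with Cauchy's theorem $\int_{\partial \Omega}\varphi \,dz=0$ this yields
\begin{align*}
\int_{\partial \Omega}|a^*(z)|\,|dz|
&=-\!\int_{\partial \Omega}i\,a^*(z)\,dz
=-i\,c^*\!\int_{\partial \Omega}\frac{f(z)\,a^*(z)}{h_N(z)}\,dz\\
&=c^*\!\int_{\partial \Omega}\frac{f(z)}{h_N(z)}\,|a^*(z)|\,|dz|
\leqslant c^*\,\|f\|_\infty\!\int_{\partial \Omega}|a^*(z)|\,|dz|.
\end{align*}
Since $\int_{\partial \Omega}|a^*|\,|dz|>0$, this gives $1\leqslant c^*\|f\|_\infty$ for every admissible $f$, with equality for $f=h_N/c^*$. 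Consequently $m^*=1/c^*$, the function $f^*:=m^*h_N=h_N/c^*$ satisfies $\|f^*\|_\infty=1/c^*=m^*$, and $f^*/m^*=h_N$ is inner of order $n\leqslant\sigma _1{+}k{-}1$.

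The hard part, and the step I would treat most carefully, is uniqueness together with the holomorphy bookkeeping at the $\zeta _j$. For uniqueness, if an admissible $f$ attains $\|f\|_\infty=m^*$ then the chain above holds with equality while $\big|\tfrac{c^*f}{h_N}\big|\leqslant1$ a.e.\ on $\partial \Omega$; this forces $\frac{c^*f}{h_N}=1$ a.e.\ on $\partial \Omega$, and since $a^*$ has only finitely many zeros on $\partial \Omega$ the identity $c^*h_N=f$ propagates to all of $\Omega$. In particular any solution of $(P_{HN})$ gives, after division by $c^*$, a minimizer of the Hermite problem, so the uniqueness of $f^*$ forces the uniqueness of $h_N$ as solution of $(P_{HN})$ for each $N\geqslant n$. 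The genuinely delicate point is the simultaneous control, at each $\zeta _j$, of the pole order of $a^*$ (at most $s_j$), the vanishing order of $h_N{-}c^*f$ (at least $s_j$), and the possibly forced zero order $\nu_j$ of $h_N$; once this is organized as above, the remainder of the argument is identical to that of Theorem~\ref{th5}.
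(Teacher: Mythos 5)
Your proposal takes essentially the same route as the paper: the paper's proof of Theorem~\ref{thderiv} consists exactly of the machinery built in Section~\ref{picder} (the meromorphic $a^*$ with zeros dominating those of $h_N$, poles at $\zeta_j$ of order $\leqslant s_j$, and the boundary condition $i\,z'a^*\leqslant 0$) followed by an implicit repetition of the integral argument of Theorem~\ref{th5}, which you have written out explicitly, including the equality case giving uniqueness and the identification $m^*=1/c^*$. Your one-paragraph treatment of the degenerate case $\tau_{j,0}=0$ (forced zeros at $\zeta_j$ matched by the vanishing of $h_N{-}c^*f$ and the modified behaviour of $a^*$ there) is at the same level of detail as the paper's own one-sentence remark, so the proposal is correct relative to the paper's standard.
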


Now, if $A\in \C^{d,d}$ is a $d\times d$ matrix with eigenvalues in $\Omega$, then,
it is easily seen that there exists a function $f_0\in H^\infty(\Omega)$ which realizes
 \[
 \|f_0(A)\|=\max\{\|f(A)\|\,: f\in H^\infty(\Omega), |f|\leqslant 1\textrm{ in  }\Omega\}.
 \]
Clearly, it holds $\|f_0\|_\infty=1$ and $1$ is the minimum norm among the functions $f\in H^\infty(\Omega)$ which satisfy $f(A)=f_0(A)$. In particular, $f_0$ realizes the minimum norm among the $f\in H^\infty(\Omega)$ such that
\[
f^{(s)}(\lambda )=f_0^{(s)}(\lambda ),\quad\textrm{for }0\leqslant s<r,
\]
for all eigenvalues of $\lambda $ with multiplicity $r$.  Hence, $f_0$ is a inner function of order $\leqslant d{+}k{-}1$, which shows Theorem\,\ref{th1}.

\section{Link with the original Pick-Nevanlinna problem}\label{link}

We turn back to Problem $(P)$ with distinct interpolation points $\zeta _1,\dots,\zeta _r$. We set
\begin{align}\label{m}
m^*(\zeta ,\tau )=\min\{\|f\|_\infty\,: f\ \in H^\infty(\Omega),\  f(\zeta _1)=\tau _1,\dots,f(\zeta _k)=\tau _r\}.
\end{align}
The solution $f^*$ of Problem $(P)$ is the unique holomorphic function
satisfying the interpolation conditions $f^*(\zeta _j)=\tau _j$, $1\leqslant j\leqslant r$ and $\|f^*\|=m^*(\zeta ,\tau )$; furthermore, $f^*/m^*(\zeta ,\tau )$ 
is an inner function of order $\leqslant r{+}k{-}1$. For $m>m^*(\zeta ,\tau )$,
there exist an infinite number of functions satisfying the interpolation conditions $f^*(\zeta _j)=\tau _j$ 
and such that $f/m$ be inner functions of order $\leqslant r{+}k$ (and thus $\|f\|_\infty=m$).
Indeed, let $\zeta _0\in\Omega$ be a point distinct of $\zeta _1,\dots,\zeta _k$ and choose $\tau _0>0\in \R$\,; we set $\tilde\zeta=(\zeta _0,\zeta _1,\dots,\zeta _k)$ and $\tilde\tau =(\tau_0,\tau_1,\dots,\tau_k)$.
It is clear that $m^*(\tilde\zeta ,\tilde\tau)\geqslant m^*(\zeta ,\tau )$, $m^*(\tilde\zeta ,\tilde\tau)\geqslant \tau_0$ and $m^*(\tilde\zeta ,\tilde\tau)=m^*(\zeta ,\tau )$ if $\tau _0=f^*(\zeta _0)$.
 We will see further that $m(\tilde\zeta ,\tilde\tau)$ depends continuously in $\tau _0$. This shows that $m^*(\tilde\zeta ,\tilde\tau)$ takes 
 all the values in the interval $[m(\zeta ,\tau ),\infty)$ as $\tau_0$ varies in $\R^+$.
In particular this shows that\,:

{\it The classical Pick-Nevanlinna problem has a solution if and only if $m^*(\zeta ,\tau )\leqslant 1$;
if, in addition, $m^*(\zeta ,\tau )=1$,  the solution is unique.}
\medskip

Note that $m^*(\zeta ,\lambda \tau )=|\lambda |\,m^*(\zeta ,\tau )$ for all $\lambda \in \C$. This function is locally Lipschitz with respect to the distinct points $\zeta _j\in\Omega$ and with respect to the $\tau _j\in \C$. More precisely, if for each $i=1,\dots,r$, we associate the polynomial: $\ell_{\zeta ,i}(x)=\prod_{j\neq i}\frac{x -\zeta _j}{\zeta_{j} -\zeta _i}$
and the function $\lambda _{\zeta ,i}$ which realizes (with the usual Kronecker symbol)
$\min\{\|f\|_\infty\,: f\ \in H^\infty(\Omega),\  f(\zeta _j )=\delta _{ij}, 1\leqslant j\leqslant r\}$, it is easily seen that\,:
\begin{align*} 
|m^*(\zeta ,\tau ){-}m^*(\zeta ,\tau' )|&\leqslant\sum_{i=1}^r\|\lambda _{\zeta ,i}\|_\infty\,|\tau _i{-}\tau '_i|,\\
|m^*(\zeta ,\tau ){-}m^*(\zeta' ,\tau)|&\leqslant\sum_{i=1}^r\| \ell_{\zeta ,i}{-}\ell_{\zeta' ,i}\|_\infty\,|\tau _i|.
\end{align*}
Note that $\|\lambda _{\zeta ,i}\|_\infty\leqslant\|\ell _{\zeta ,i}\|_\infty$.\medskip

Abrahamse has given another criterion for the existence of a solution to the Pick-Nevanlinna problem. Let us consider the $r\times r$ matrix \,: $M(m,\tau,\zeta,\alpha)$ with entries
\[
M_{i,j}(m,\tau ,\zeta ,\alpha)=(m^2-\tau _i\bar\tau _j)k^\alpha(\zeta _i,\zeta_j),
\]
where
$k^\alpha(\zeta_1,\zeta _2)$ is a family of kernels (of Poisson type) on $\Omega\times\Omega$ indexed by $\alpha\in \T^r$ (we refer to the paper \cite{abra} for their definitions).
The criterion of Abrahamse is : 

\textit{The Pick-Nevanlinna problem has a solution if and only if the matrix 
$M(1,\tau,\zeta,\alpha)$ is positive definite for all $\alpha\in \T^r$; this solution is unique if and only if 
furthermore there exists $\alpha\in \T^r$ such that {\rm det}$(M(1,\tau,\zeta,\alpha))=0$. }

This shows that $m^*(\zeta ,\tau )$ is the smallest value of $m>0$ such that $M(m,\tau ,\zeta,\alpha )$ is positive semidefinite for all $\alpha\in \T^r$.

In the simple case where $\Omega=\D$ is the unit disk, $k^\alpha(\zeta _i,\zeta_j)=1{-}\zeta _i \bar\zeta _j$, this is the famous Pick condition\,\cite{pick}. But, except in the case of the annulus, explicitness and computability of these kernels are not known.

\section{Appendix. Proof of Lemma\,\ref{lem7}. }

We consider $r$ given points $\zeta_1{,\dots},\zeta_r\in\Omega$. 
Let us note $e_1{=}(1,0,{\dots},0)^T$\!, $e_2{=}(0,1,0,{\dots},0)^T$\!,
\dots, $e_k=(0,\dots,0,1)^T$ the canonic basis of $\R^k$, and set $e=e_1{+}\cdots{+}e_k$.
We choose $k$ smooth paths $\{\gamma _j(\tau )\,: 0\leqslant\tau \leqslant1\}$ satisfying $\gamma _j(\tau )\in\Omega$ if $0<\tau <1$, $\gamma _j(0)\in \Gamma _0$, and $\Gamma _j(1)\in \Gamma _j$; 
we assume that these paths are disjoint and that they do not contain any of the given points $\zeta_1,\dots,\zeta_r\in\Omega$. 
We now consider the functions
\[
 \varphi _j(\tau )=(\omega_1(\gamma _j(\tau )),\omega_2(\gamma _j(\tau )),\dots,\omega_k(\gamma _j(\tau )))^T,\quad\textrm{defined for }\tau \in[0,1]
\]
that we extend as Lipschitz functions from $\R$ into $\R^k$ in such a way that
\[
\varphi _j(\tau {+}l)=\varphi _j(\tau )+l\,e_j, \quad\textrm{for all }l\in \Z.
\]
We now consider the functions (with $t=(t_1,\dots,t_k)^T\in\R^k$, $s\in [0,1]$)
\begin{align*}
\Psi_1(t)=\varphi_1(t_1){+}\cdots{+}\varphi_k(t_k)\quad\textrm{and}\quad \Psi _s(t)=(1{-}s)t+s\,\Psi _1(t).
\end{align*}
Note that $\Psi _s(t{+}le_j)=\Psi _s(t){+}le_j$ if $l\in\Z$, $j\in\{1,\dots,k\}$ and $\Psi _s(t)=t$ if $t\in\Z^k$.

\begin{lemma}\label{esp}
The image of $\R^k$ by $\Psi _s$ is the space $\R^k$ for all $0\leqslant s\leqslant 1$.
\end{lemma}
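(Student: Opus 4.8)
The plan is to show that for every $s\in[0,1]$ the map $\Psi_s$ differs from the identity of $\R^k$ by a \emph{bounded} continuous perturbation, and then to deduce surjectivity from Brouwer's fixed point theorem.

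First I would isolate the periodic part of each $\varphi_j$. Set $\Phi_j(\tau)=\varphi_j(\tau)-\tau\,e_j$ for $\tau\in\R$. The defining relation $\varphi_j(\tau{+}1)=\varphi_j(\tau){+}e_j$ gives $\Phi_j(\tau{+}1)=\Phi_j(\tau)$, so $\Phi_j$ is continuous (indeed Lipschitz) and $1$-periodic, hence bounded on $\R$; write $C_j=\sup_{\tau}|\Phi_j(\tau)|$ and $C=C_1{+}\cdots{+}C_k$. Since $\Psi_1(t)=\sum_{j=1}^k\varphi_j(t_j)$ while $t=\sum_{j=1}^k t_j e_j$, one gets
\[
\Psi_1(t)-t=\sum_{j=1}^k\big(\varphi_j(t_j)-t_j e_j\big)=\sum_{j=1}^k\Phi_j(t_j),
\]
so $|\Psi_1(t)-t|\le C$ for all $t$. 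Because $\Psi_s(t)-t=s\big(\Psi_1(t)-t\big)$, the bound $|\Psi_s(t)-t|\le C$ then holds for every $t\in\R^k$ and every $s\in[0,1]$, uniformly in $s$. Thus each $\Psi_s$ has the form $\Psi_s=\mathrm{id}+B_s$ with $B_s$ continuous and $\|B_s\|_\infty\le C$.

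To prove surjectivity, fix an arbitrary target $y\in\R^k$. Solving $\Psi_s(t)=y$ is equivalent to finding a fixed point of the continuous map $G(t):=y-B_s(t)$, since $\Psi_s(t)=t+B_s(t)=y$ holds if and only if $t=y-B_s(t)$. As $|G(t)-y|=|B_s(t)|\le C$, the map $G$ sends the closed ball $\overline{B}(y,C)$ into itself; being continuous on this compact convex set, it admits a fixed point by Brouwer's theorem, which is precisely a preimage of $y$ under $\Psi_s$. Hence $\Psi_s(\R^k)=\R^k$ for every $s\in[0,1]$.

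The main obstacle is really the single structural observation that $\varphi_j(\tau)-\tau e_j$ is periodic, which is what turns $\Psi_s$ into a bounded perturbation of the identity \emph{uniformly} in $s$; once this is in hand, surjectivity is the standard fact that such perturbations are onto. The fixed-point conclusion is then routine. An alternative to Brouwer would exploit the $\Z^k$-equivariance $\Psi_s(t{+}le_j)=\Psi_s(t){+}le_j$ already recorded before the lemma: $\Psi_s$ descends to a self-map of the torus $\T^k$ that is homotopic to the identity through the family $s\mapsto\Psi_s$, hence has degree $1$ and is onto $\T^k$, and equivariance lifts this to surjectivity on $\R^k$. I would present the Brouwer argument, since it is self-contained, and spend the care on the periodicity and uniform-boundedness step.
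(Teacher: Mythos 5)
Your proof is correct, and it takes a genuinely different route from the paper's. You isolate the structural fact that $\Phi_j(\tau)=\varphi_j(\tau)-\tau e_j$ is $1$-periodic (hence bounded), so that $\Psi_s=\mathrm{id}+B_s$ with $\|B_s\|_\infty\leqslant C$ uniformly in $s$, and you then get surjectivity by applying Brouwer's fixed point theorem to $G(t)=y-B_s(t)$ on the ball $\overline{B}(y,C)$. The paper instead runs a degree-theoretic (solid angle) argument: it takes the cube $C_k=[-k,2]^k$, shows by direct estimates on the components of the harmonic measures that $\Psi_s(\partial C_k)$ never meets the unit cube $[0,1]^k$, uses homotopy invariance of the solid angle (first in $s$, starting from $\Psi_0=\mathrm{id}$, then along a contraction of the cube to a point) to conclude that every point of the unit cube lies in $\Psi_s(C_k)$, and finally invokes the $\Z^k$-equivariance $\Psi_s(t+le_j)=\Psi_s(t)+le_j$ to propagate this to all of $\R^k$. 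The two arguments rest on the same topological phenomenon (a bounded perturbation of the identity is onto), but yours packages it more cleanly: the uniform bound makes the parameter $s$ irrelevant, no boundary-avoidance estimates or final translation step are needed, and the only black box is Brouwer's theorem rather than an informally invoked solid-angle/degree continuity. What the paper's version buys in exchange is that its face estimates are exactly the computation that, in your language, bounds the first component of $B_s$; the two proofs are in this sense dual presentations, but yours is the more self-contained and economical one.
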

\begin{proof}
We consider the cube $C_k=[-k,2]^k$ and its boundary $\Sigma$; let
 \begin{align*}
\Sigma _{js}=\{\Psi_s (t)\,:t\in C_k, \ t_j=-k\},\quad
\Sigma  _{k+j,s}=\{\Psi_s (t)\,:t\in C_k, \ t_j=2\},
\end{align*}
be the images by $\Psi _s$ of the faces of $C_k$ and $\Sigma _s=\Psi _s(\Sigma )=\cup_{j=1}^{2k}\Sigma _{js}$. 
Notice that the intersection of $\Sigma _s$ with the unit cube $C=[0,1]^k$ is empty. Indeed, if, for instance, 
$x\in\Sigma _{1s}$,  $x=\Psi _s(-k,t_2,\dots,t_k)$, then $\big(\varphi _1(-k)\big)_1=-k$ while $\big(\varphi _j(t_j)\big)_1=\omega_1(\gamma _j(t_j{-}[t_j])\in[0,1]$, $j=2,\dots,k$; this implies $x_1=-k{+}s\sum_{j=2}^k\big(\varphi _j(t_j)\big)_1\leqslant-1$
 (we have denoted by $[t_j]$ the integer part of $t_j$).
 Similarly, if $x=\Psi _s(2,t_2,\dots,t_k)\in\Sigma _{k+1,s}$, then $\big(\varphi _1(1)\big)_1=2$ and we get $x_1=2{+}s\sum_{j=2}^k\big(\varphi _j(t_j)\big)_1\geqslant2$.
\medskip

We will denote by $a(t,S)$ the solid angle of a surface $S\subset \R^k$ observed from a point $t\notin S$
and $\varpi_k$ the constant which corresponds to the solid angle of the unit sphere observed from its
center.
 Let us consider a point $d$ belonging to the unit cube; then,
 $a(d,\Sigma)=\varpi_k$ and we have seen that $\forall s\in [0,1]$, $d\notin\Sigma _s$. Thus, the function
 $a(d,\Sigma _s)$ is continuous with respect to $s$ with values in $\varpi_k \Z$, whence it is constant
  equal to $\varpi_k $.

We now show that the compact set $\Psi (C_k)$ contains the unit cube $C$. Indeed, otherwise there 
would exist a point $d\in C$ 
and $\varepsilon >0$ such that the ball $\{\|x{-}d\|\leqslant \varepsilon\} $ does
not meet $\Psi (C_k)$.
As $\theta $ varies from $0$ to $1$, the surface $\Sigma _s(\theta )=\Psi_s((1{-}\theta )\Sigma)$ is continuously distorted from $\Sigma _s(0)=\Sigma _s$ towards the point $\Sigma _s(1)=\Psi _s(0)=0$; furthermore, there holds $\Sigma _s(\theta )\subset\Psi(C_k)$.
The solid angle $a(d,\Sigma _s(\theta ))$ is a continuous function of $\theta $ with values in $\varpi_k \Z$, thus a constant function. In fact, it clearly holds $a(d,\Sigma _s(1))=0$ since $\Sigma _s(1)$ is reduced to a point;
therefore $a(d,\Sigma _s)=a(d,\Sigma _s(0))=0$. This shows that, if $d$ belongs to the unit cube, then $d\in \Psi _s(\R^k)$,  $\forall s\in [0,1]$. 
The image $\Psi_s (\R^k)$ being invariant by $\Z^k$ translations, we deduce that $\Psi _s(\R^k)=\R^k$.
  \end{proof}

\begin{proof}[Proof of Lemma\,\ref{lem7}] 
Let $\zeta _1,\dots,\zeta _r\in\Omega$ be $r$ given points; we set $d_j=-\sum_{l=1}^r\omega_j(\zeta _l)$. 
There exists $t\in \R^k$ such that $d=\Psi _1(t)$. We denote by $[t]$ the vector of $\R^k$ with components $[t_j]$. 
By setting $z_{j+r}=\gamma _j(t_j{-}[t_j])\in\overline\Omega$, we deduce from $\Psi _1(t{-}[t])=d-[t]$ that, for $j=1,\dots,k$,
\[
0<\omega_j(\zeta _1){+}\cdots{+}\omega_j(\zeta _{r+k})=\omega_j(\zeta _{r+1}){+}\cdots{+}\omega_j(\zeta _{r+k}){-}d_j=-[t_j]\in\N.
\]
We can suppress $\zeta _{r+l}$ of these summation if $\zeta _{r+l}\in \partial \Omega$ and still have
$\omega_j(\zeta _1){+}\omega_j(\zeta _{r+k'})\in\N$ with all $\zeta _j\in\Omega$ (eventually $k'=0$
if $d\in\Z^k$). The choice of the paths $\gamma _j$ allows to have the new points distinct and distinct from $\zeta _1,\dots\zeta _r$,
\end{proof}

\end{document}